\newtheorem{theorem}{Theorem}[section]
\newtheorem{lemma}[theorem]{Lemma}
\newtheorem{corollary}[theorem]{Corollary}
\theoremstyle{definition}
\newtheorem{definition}[theorem]{Definition}
\theoremstyle{remark}
\newtheorem{remark}[theorem]{Remark}
\newcommand{\R}{\mathbb R}%
\newcommand{\M}{\mathcal M}%
\newcommand{\C}{\mathbb C}%
\newcommand{\N}{\mathbb N}%
\numberwithin{equation}{section}
\begin{document}
\title[Admissible and sectorial convergence on Harmonic $NA$ groups]{Admissible and sectorial convergence of generalized Poisson integrals on Harmonic $NA$ groups}
\author[Utsav Dewan]{Utsav Dewan}
\address{Stat-Math Unit, Indian Statistical
Institute, 203 B. T. Rd., Calcutta 700108, India}
\email{utsav97dewan@gmail.com}
\thanks{The author is supported by a research fellowship of Indian Statistical
Institute}
\subjclass[2000]{Primary 43A80,31B25,35R03; Secondary 28A15,44A35.} % may be changed
\keywords{Harmonic $NA$ groups, Laplace-Beltrami operator, Fatou Theorem, Sectorial limit}

\begin{abstract} We prove a converse of Fatou type result for certain eigenfunctions of the Lalplace-Beltrami operator on Harmonic NA groups relating sectorial convergence and admissible convergence of Poisson type integrals of complex (signed) measures. This result extends several results of this kind proved eariler in the context of the classical upper half space $\R_+^{n+1}$. Similar results are also obtained in the degenerate case of the real hyperbolic spaces.
\end{abstract}
\maketitle

\section{Introduction}
In the celebrated article \cite{Fatou}, Fatou posed and solved the classical problem which relates the differentiability of a suitable Borel measure (possibly complex valued) $\mu$ on $\R$ with the boundary behaviour of the harmonic function $u=P[\mu]$ on the upper half-plane $\R^2_+$ obtained as the Poisson integral of $\mu$. More precisely, it was proved that the harmonic function $u$ will have non-tangential limit at some $x_0 \in \R$  if $\mu$ is differentiable at $x_0$. Loomis \cite{L} proved that the converse is true when $\mu$ is assumed to be positive. It was also shown by an example that the converse is false in the absence of  positivity of $\mu$.

Ramey and Ullrich \cite[Theorem 2.2]{UR} obtained higher dimensional generalizations of the result of Loomis by introducing the notion of strong derivative of a measure. Precisely, it was proved in \cite{UR} that the non-tangential convergence of the Poisson integral $P[\mu]:\R_+^{n+1}\to [0,\infty)$, to a boundary point $x_0\in\R^n$ is equivalent to the existence of the strong derivative of the measure $\mu$ at $x_0$. Among other things, their proof crucially uses the positivity of $\mu$. We refer the reader to \cite{Brossard} for a remarkable generalization of this result which deals with a more general class of measures. \cite[Theorem 2.2]{UR} has recently been extended to the class of Harmonic $NA$ groups in \cite{Ray}, for certain positive eigenfunctions of the Laplace-Beltrami operator $\mathcal{L}$.
\begin{theorem}[\cite{Ray}, Theorem 4.2] \label{result_on_admissiblelimit}
Suppose that $u$ is a positive eigenfunction of $\mathcal{L}$ in $S$ with eigenvalue $\beta^2 - \rho^2$, where $\beta >0$, with boundary measure $\mu$, and that $n_0 \in N, L \in [0,\infty)$. Then $\mu$ has strong derivative at $n_0$, equal to $L$ if and only if the function $(n,a) \mapsto a^{\beta - \rho} u(n,a)$ has admissible limit $L$ at $n_0$ .
\end{theorem}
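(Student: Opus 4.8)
The plan is to exploit the Poisson representation of positive eigenfunctions and to treat the two implications separately, with the converse direction (admissible limit $\Rightarrow$ strong derivative) being where positivity is indispensable, exactly as in the Loomis and Ramey--Ullrich arguments. By hypothesis $u$ is the generalized Poisson integral $u(n,a) = \int_N \mathcal{P}_\beta((n,a), n')\, d\mu(n')$ of its positive boundary measure $\mu$, where $\mathcal{P}_\beta$ is the Poisson kernel with spectral parameter $\beta$; I would fix the normalization so that $a^{\beta-\rho}\int_N \mathcal{P}_\beta((n,a), n')\, dn'$ is identically $1$, which is consistent with the theorem since $\mu$ equal to Haar measure $m$ then yields $a^{\beta-\rho}u \equiv 1$. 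The whole argument rests on sharp two-sided estimates for $\mathcal{P}_\beta$: a lower bound $a^{\beta-\rho}\mathcal{P}_\beta((n,a), n') \ge c\,|B(n, a)|^{-1}$ for $n'$ in the non-isotropic ball $B(n, a)$, together with off-diagonal decay controlled by $d(n, n')/a$ that makes $a^{\beta-\rho}\mathcal{P}_\beta$ an approximate identity as $a \to 0$.

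For the forward implication I would write, using the normalization,
\[
a^{\beta-\rho} u(n,a) - L = \int_N a^{\beta-\rho}\mathcal{P}_\beta((n,a), n')\, d(\mu - L\,m)(n').
\]
Decomposing $N$ into the ball to which the admissible approach confines $n$ and the dyadic annuli around $n_0$, I would control $(\mu - L\,m)$ on each piece by the strong-derivative hypothesis and sum against the kernel decay. Within an admissible region the distortion $d(n, n_0)/a$ stays bounded, so this forces the admissible limit to equal $L$. This is a standard, if technical, approximate-identity estimate and I do not expect an essential difficulty here.

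The converse is the crux. From the lower bound on the kernel I obtain, for the normalized kernel restricted to $B(n,a)$, the inequality $a^{\beta-\rho}u(n,a) \ge c\,\mu(B(n,a))/|B(n,a)|$; since the left side is bounded by the admissible convergence, this yields a uniform upper bound on the averages $\mu(B)/|B|$ over all small balls $B$ with centers in an admissible range of $n_0$, whence $\mu$ is absolutely continuous with locally bounded density near $n_0$ by the differentiation theory on $N$. The remaining, and most delicate, step is to identify the value of the density: writing $a^{\beta-\rho}u(n,a)$ as an average of this density against the approximate-identity kernel and using $\mu \ge 0$ to discard the nonnegative tail, I would show that the admissible value $L$ is transmitted exactly to the strong derivative, via a weak-$*$ compactness and dominated-convergence argument for the dilated measures.

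The main obstacle, as anticipated, is making this converse sharp, i.e.\ obtaining exactly $L$ rather than $L$ up to a multiplicative constant. This requires the precise boundary asymptotics of the normalized kernel, both its concentration profile and the rate of its tail, and a careful alignment of the (Kor\'anyi) admissible approach regions with the strong-derivative balls, since in the $NA$ geometry the dilation structure on $N$ is anisotropic and the naive Euclidean comparison between cones and balls must be replaced by the correct homogeneous metric.
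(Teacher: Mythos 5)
First, a caveat about what you are being compared to: this paper does not prove Theorem \ref{result_on_admissiblelimit} at all --- it is quoted verbatim from \cite{Ray} (Theorem 4.2), and only its ingredients appear here (Lemmas \ref{lbetaharmonic}, \ref{our_bar}, \ref{positiveeigen}, \ref{dilationof_Lbeta}, \ref{HL}) because they are reused for Theorem \ref{result_1}. Those lemmas encode the actual proof scheme, which is the Ramey--Ullrich dilation/normal-families argument: set $F(n,a)=a^{\beta-\rho}u(n,a)$, which is $\mathcal{L}^{\beta}$-harmonic; its dilates $F_r(n,a)=F(\delta_r(n),ra)$ are again $\mathcal{L}^{\beta}$-harmonic and, by the maximal estimate of Lemma \ref{HL}(b), locally bounded, so the Montel property (Lemma \ref{our_bar}) yields normal subsequential limits; admissible convergence together with the dilation invariance of $\Gamma_{\alpha}(\underline{0})$ and the fact that $\bigcup_{\alpha>0}\Gamma_{\alpha}(\underline{0})=S$ forces every such limit to be the constant $L$; finally, positivity gives weak-$*$ compactness of the dilated boundary measures, and the uniqueness in the Damek--Ricci representation (Lemma \ref{positiveeigen}) identifies every weak-$*$ limit point with $L\,m$, which is exactly strong differentiability. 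Your proposal replaces this scheme with direct kernel estimates, and that is where it breaks.

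The genuine gap is in your converse direction. From admissible boundedness you only control $\mu(B(n,a))/m(B(n,a))$ for $d(n_0^{-1}n)<\alpha a$, and every such ball is sandwiched between $B(n_0,ca)$ and $B(n_0,Ca)$ by the quasi-triangle inequality; so what you obtain is precisely $\sup_{0<r<r_0}\mu(B(n_0,r))/m(B(n_0,r))<\infty$, i.e.\ condition $(\mathscr{H}_1)$ of this paper, and nothing more. The conclusion you draw from it --- that $\mu$ is absolutely continuous with locally bounded density near $n_0$ --- is false: a purely atomic measure such as $\sum_k 2^{-k}\delta_{n_k}$ with $d(n_k)\asymp 2^{-k}$ has bounded ratios at $\underline{0}$ yet is singular in every neighbourhood of $\underline{0}$. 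With that step gone, your final step (``weak-$*$ compactness and dominated convergence transmit $L$ to the strong derivative'') is not a proof but a restatement of the crux: there is no dominating function and no a.e.\ convergence available, and supplying the missing mechanism is exactly what the normal-families-plus-uniqueness argument above does. A smaller but real problem also occurs in your forward direction: the strong derivative controls the values $(\mu-Lm)(n_0\delta_r(B))$, not the total variation $|\mu-Lm|$ on annuli, so ``controlling $(\mu-Lm)$ on each piece and summing against the kernel decay'' conflates a signed measure with its variation; one must instead pass through weak convergence of the dilated measures (or a layer-cake argument exploiting monotonicity of the kernel profile), which is again the route the cited proof takes.
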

Motivated by a result of Gehring \cite{Gehring}, Ramey and Ullrich also related the sectorial convergence and non-tangential convergence of Poisson integral of positive measure. We will need to introduce some notation to explain this result.
In $\R^3_+$ the non-tangential cone with vertex at $x_0 \in \R^2$ and aperture $\alpha >0$ is defined by
\begin{equation*}
\Gamma_{\alpha}(x_0)=\{(x,y) \in \R^3_+ : \|x-x_0\| < \alpha y\}.
\end{equation*}
The topological boundary $\partial \Gamma_{\alpha}(x_0)$ of $\Gamma_{\alpha}(x_0)$ with respect to $\R^3_+$ is denoted by
\begin{equation*}
\partial \Gamma_{\alpha}(x_0)=\{(x,y) \in \R^3_+ : \|x-x_0\| = \alpha y\}.
\end{equation*}
Then a sector $\mathscr S$ of $\partial \Gamma_{\alpha}(x_0)$ is defined to be the portion of $\partial \Gamma_{\alpha}(x_0)$ bounded by two rays in $\partial \Gamma_{\alpha}(x_0)$ emanating from $x_0\in\R^2$, that is
\begin{equation*}
\mathscr S =\bigcup\limits_{\theta\in (\theta_1,\theta_2)}\gamma_{x_0,\alpha,\theta}(0,\infty),\:\:\:\:\text{for some $\theta_1, \theta_2 \in [0,2\pi)$,}
\end{equation*}
where $\gamma_{x_0,\alpha,\theta}:(0,\infty)\to \R^3_+$ is defined by,
\begin{equation*}
\gamma_{x_0,\alpha,\theta}(y)=(x_0+ (\alpha y\cos\theta, \alpha y\sin\theta),y).
\end{equation*}
A function $u:\R^3_+\to \C$ is said to have a sectorial limit $L\in\C$ at $x_0$ if for some $\mathscr S\subset \partial\Gamma_{\alpha}(x_0)$
\begin{equation*}
\displaystyle\lim_{\substack{
z \to 0\\
z \in \mathscr{S}}}
u(z)=L.
\end{equation*}
In \cite[Theorem 3.3]{UR} it was proved that if a positive harmonic function $u$ on $\R^3_+$ has a sectorial limit $L \in [0,\infty)$ at $x_0$  then $u$ has non-tangential limit $L$ at $x_0$. Although the result was proved for $\R^3_+$, it can be easily generalized to $\R^{n+1}_+$ for $n \ge 3$. A variant of this result was proved by Saeki in \cite[Corollary 3.4]{Sa}. For a class of measures (not necessarily nonnegative), Saeki proved that if the Poisson integral $P[\mu]$ converges to $L$ along rays (emanating from a boundary point $x_0\in\R^n$) which form a dense subset of the topological boundary of some non-tangential cone in $\R^{n+1}_+$ then $P[\mu]$ has non-tangential limit $L$ at $x_0$.

A comparison between the result of Ramey and Ullrich regarding sectorial convergence of the Poisson integral and that of Saeki is as follows. Ramey and Ullrich dealt with nonnegative measures $\mu$ and assumed convergence of $P[\mu]$ on some sector of the topological boundary of a non-tangential cone which is not a dense subset of this topological boundary. However, the result of Saeki accommodates a larger class of measures $\mu$ but assumes convergence of $P[\mu]$ along rays on some dense subset of the topological boundary of a non-tangential cone.

In this paper we wish to prove a result which generalizes both the results stated above in the context of Damek-Ricci spaces or the so called Harmonic $NA$ groups. Instead of Poisson integral of nonnegative measures we deal with certain generalized Poisson integral (related to positive eigenfunctions of the Laplace-Beltrami operator) of measures on the boundary $N$ which satisfies a condition similar to that of Saeki. We will also introduce a notion analogous to that of a sector (see Definition \ref{sectordefn}) suitable for our context and prove that the boundary behaviour of the corresponding convolution integral along a sector is sufficient to conclude the existence of its admissible limit (as introduced in \cite{Kosym}). We refer the reader to Section $2$ for unexplained notations and terminologies. The following is our main result.
\begin{theorem}
\label{result_1}
For $\beta >0$, suppose $\mu \in \M_{\beta}$ is such that
\begin{equation}
\label{measure_condition}
\displaystyle\sup_{0<r<t_0} \frac{|\mu|(B(n_0,r))}{m(B(n_0,r))} < \infty \:,
\end{equation}
for some  $t_0>0$, for $n_0 \in N$. If $u=Q_{i \beta}[\mu]$ has a sectorial limit $L \in \mathbb{C} $ at $n_0$, then $u$ has admissible limit $L$ at $n_0$.
\end{theorem}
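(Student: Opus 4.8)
The plan is to argue by rescaling (a blow-up) at the boundary point, reducing the statement to a rigidity property of $\mathcal{L}$-eigenfunctions that are invariant under the dilations of $S=NA$. By the left-translation invariance of $S$ and of both hypotheses, I would first assume $n_0=e$. Write $v(n,a)=a^{\beta-\rho}u(n,a)$, so that the sectorial and admissible limits in the statement are limits of $v$ (matching the normalization in Theorem \ref{result_on_admissiblelimit}), and recall that $u=Q_{i\beta}[\mu]$ satisfies $\mathcal{L}u=(\beta^2-\rho^2)u$. The first step is to extract uniform boundedness from \eqref{measure_condition}: estimating the generalized Poisson kernel of $Q_{i\beta}$ against the bounded density ratio $|\mu|(B(e,r))/m(B(e,r))$ --- splitting the defining integral into dyadic shells around $e$ and summing the kernel's decay --- yields $\sup|v|<\infty$ on every admissible region with vertex $e$. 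This is the analogue of the Hardy--Littlewood bound for Poisson integrals, and is essentially the only place \eqref{measure_condition} is used.

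Next I would introduce the one-parameter group of (anisotropic) dilations $\{\delta_t\}_{t>0}$ of $N$, extended to automorphisms of $S$ fixing $e$, and set $v_t=v\circ\delta_t$. Since the eigenvalue equation is covariant under $\delta_t$, each $v_t$ is again a normalized eigenfunction, and by the previous step the family $\{v_t\}$ is uniformly bounded on every admissible region. Elliptic regularity for $\mathcal{L}$ then provides uniform interior estimates, so $\{v_t\}$ is precompact in $C^\infty_{\mathrm{loc}}(S)$; let $V$ be any subsequential limit as $t\to 0$. Two features survive the limit: $V$ is an $\mathcal{L}$-eigenfunction with the same eigenvalue, and $V$ is invariant under every $\delta_s$ (because $v_{ts}$ has the same limit). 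Together with \eqref{measure_condition}, which controls any tangent measure of $\mu$ at $e$ by a multiple of Haar measure, this identifies $V$ as the generalized Poisson integral of a $\delta_s$-invariant density.

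Since the sector $\mathscr{S}$ of Definition \ref{sectordefn} is dilation-invariant and $v\to L$ along it, one gets $V=L$ on all of $\mathscr{S}$. The hard part --- and the heart of the theorem --- is the rigidity step: to upgrade ``$V=L$ on $\mathscr{S}$'' to ``$V\equiv L$ on $S$.'' Plain unique continuation does not suffice, since $\mathscr{S}$ has positive codimension; this is precisely why the isotropic arguments of Ramey--Ullrich (which use positivity) and of Saeki (which use density of the rays throughout the \emph{whole} boundary) do not transfer verbatim to the non-abelian boundary $N$ with its anisotropic dilations. The plan is to exploit the precise shape of $\mathscr{S}$: using its dilation orbit together with a Dirichlet-type uniqueness argument on the region it cuts out --- in the spirit of Saeki's reduction, but adapted to $S$ --- to pin $V$ down on an open set, and then to invoke real-analyticity of $\mathcal{L}$-eigenfunctions to conclude $V\equiv L$. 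Equivalently, one shows that the $\delta_s$-invariant tangent density whose generalized Poisson integral is constant on $\mathscr{S}$ must be the constant yielding $L$. Granting this, every subsequential blow-up limit equals the constant $L$, so $v_t\to L$ locally uniformly as $t\to0$; since admissible regions are dilation-invariant, this is precisely the statement that $u$ has admissible limit $L$ at $n_0$.
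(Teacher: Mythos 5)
Your overall skeleton matches the paper's proof: reduce to $n_0=\underline{0}$, use the density hypothesis to get boundedness on admissible regions (the paper's Lemma \ref{HL}(b)), dilate, extract a normally convergent subsequence of $\mathcal{L}^{\beta}$-harmonic dilates via a Montel-type theorem (Lemma \ref{our_bar}), show the limit function equals $L$ on the sector using only the \emph{set} invariance of rays under $\delta_r$, and finally upgrade this to the whole space. However, two of your steps have genuine gaps. First, your claim that a subsequential blow-up limit $V$ is invariant under every $\delta_s$ ``because $v_{ts}$ has the same limit'' is circular: if $v_{t_k}\to V$ then $v_{t_k s}\to V\circ\delta_s$, but $\{t_k s\}$ is a different sequence and nothing forces its subsequential limit to coincide with $V$ --- self-similarity of blow-up limits is exactly the kind of statement that fails without the theorem you are trying to prove. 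The subsequent identification of $V$ as the generalized Poisson integral of a dilation-invariant tangent density is likewise unjustified (bounded limits of this kind need not a priori admit such a representation), and fortunately unnecessary: the paper never uses any invariance of the limit function, only that it equals $L$ on $\mathscr{S}$ and is bounded on admissible regions.

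Second, and more seriously, the rigidity step --- which you correctly identify as the heart of the theorem --- is left as ``Granting this.'' Your proposed route (a Dirichlet-type uniqueness argument on ``the region it cuts out,'' then real-analyticity) cannot work as stated: the sector is a proper, relatively open wedge of the hypersurface $\partial\Gamma_{\kappa}(\underline{0})$ and bounds no open region by itself, and vanishing of an eigenfunction on a piece of a hypersurface is not a unique-continuation hypothesis. The paper's mechanism is concrete and two-staged: (i) since $\mathcal{L}^{\beta}$-harmonic functions are real-analytic and the sector is a relatively open subset of the real-analytic submanifold $\partial\Gamma_{\kappa}(\underline{0})$, the zero-set lemma (Lemma \ref{peralta}) propagates the vanishing of $\tilde v=v-L$ from $\mathscr{S}$ to the \emph{entire} lateral boundary $\partial\Gamma_{\kappa}(\underline{0})$; (ii) a Phragm\'en--Lindel\"of/maximum-principle argument (Lemma \ref{max1}) on truncated cones, with the explicit $\mathcal{L}^{\beta}$-harmonic barriers $M\varepsilon a^{2\beta}$ and $2M\int_{B(\underline{0},\delta)}q^{i\beta}_a(n_1^{-1}n)\,dm(n_1)$ controlling the top and bottom slices and the boundedness of $\tilde v$ on $\Gamma_{\kappa}(\underline{0})$ feeding the constant $M$, forces $\tilde v\equiv 0$ in the open cone; only then does real-analyticity on an open set give $\tilde v\equiv 0$ on $S$. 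Nothing in your sketch supplies stage (ii), which is where the boundedness furnished by \eqref{measure_condition} is used a second time. Two smaller points: $u=\mathcal{Q}_{i\beta}[\mu]$ is already the normalized object --- it is $\mathcal{L}^{\beta}$-harmonic, not an $\mathcal{L}$-eigenfunction, so setting $v=a^{\beta-\rho}u$ double-normalizes; and your dyadic-shell bound needs the paper's Step 2 reduction (restrict $\mu$ to a ball, treat the tail by the decay estimate of Lemma \ref{asympt}), since \eqref{measure_condition} controls the density ratio only for $r<t_0$.
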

We refer the reader to Remark \ref{conditions_on_the_boundary_measure} regarding the condition imposed on the boundary measure in the theorem above which shows that this condition accommodates all the cases dealt with in \cite{UR, Brossard}  for Euclidean spaces. This paper is organised as follows. In section $2$, we will discuss the preliminaries about Harmonic NA groups, generalized Poisson kernel, Poisson integral on these groups and introduce the notion of sector and the sectorial limit. In section $3$ we prove some auxiliary results which play a crucial role in the proof of our main result. The proof of the main result is given in Section $4$. In Section $5$ we discuss some related results for the Real hyperbolic spaces.

\section*{Acknowledgements} The author  would  like to thank Prof. Swagato K. Ray for suggesting the problem. The author is grateful to him and to Prof. Kingshook Biswas for many illuminating discussions.

\section{Preliminaries of Harmonic NA groups}
In this section we will discuss the necessary preliminaries and fix our notations. Most of this material can be found in \cite{ ADY, ACD, FS, Ray}.

Let $\mathfrak n$ be an $H$-type Lie algebra. $\mathfrak z$ and $\mathfrak v$ will denote the center of $\mathfrak n$ and its orthogonal complement (with respect to the underlying inner product) respectively. The connected and simply connected Lie group $N$ corresponding to $\mathfrak n$ is called an $H$-type group. Since $\mathfrak{n}$ is nilpotent, the exponential map defines an analytic diffeomorphism and hence one gets the parametrization  $N=\exp\mathfrak{n}$ by $(X,Z)$ for $X\in\mathfrak{v},\:Z\in\mathfrak{z}$.  The group law of $N$ in the exponential coordinates is given by
\begin{equation*}
(X,Z)(X',Z')=(X+X',Z+Z'+\frac{1}{2}[X,X']).
\end{equation*}
The Haar measure of $N$ is given by $dXdZ$ and will be denoted by $m$. Let $S=NA$ be the semi-direct product of $N$ and the multiplicative group $A=(0,\infty)$ with respect to the nonisotropic dilation:
\begin{equation}\label{nonisotropic}
\delta_a(n)=\delta_a(X,Z)=(\sqrt{a}X,aZ),\:\:a\in A,\:n=(X,Z)\in N.
\end{equation}
Hence the multiplication on $S$ is given by
\begin{equation*}
(X,Z,a)(X',Z',a')=(X+\sqrt{a}X',Z+aZ'+\frac{1}{2}\sqrt{a}[X,X'],aa').
\end{equation*}
Then $S$ turns out to be a solvable Lie group having Lie algebra $\mathfrak{s}=\mathfrak v \oplus\mathfrak{z}\oplus\R$ with Lie bracket
\begin{equation*}
[(X,Z,u),(X',Z',u')]=(\frac{1}{2}uX'-\frac{1}{2}u'X,uZ'-u'Z+[X,X'],0).
\end{equation*}

We shall write $(n,a)=(X,Z,a)$ for the element $\left(\exp(X+Z),a\right),\:a\in A,\:X\in\mathfrak{v},\:Z\in\mathfrak{z}$ and thus identify $S$ with the upper-half space $\R^{2p+k+1}_+$ topologically, where $dim\:\mathfrak{v}=2p,\:dim\:\mathfrak{z}=k$. Setting $Q=p+k$, one can see that
\begin{equation}\label{measuredilation}
m\left(\delta_a(E)\right)=a^Qm(E),
\end{equation}
is true for all measurable sets $E\subseteq N$ and $a\in A$. This $Q$ is called the homogeneous dimension of $N$. The group $S$ is equipped with the left-invariant Riemannian metric induced by
$$\langle (X,Z,u),(X',Z',u')\rangle_S=\langle X,X'\rangle+\langle Z,Z'\rangle+uu'$$
on $\mathfrak{s}$. Then the associated left-invariant Haar measure $dx$ on $S$ is given by
\begin{equation*}
dx=a^{-Q-1}dXdZda,
\end{equation*}
where $dX,\:dZ,\:da$ are the Lebesgue measures on $\mathfrak{v},\:\mathfrak{z},\:A$ respectively. Let $\rho=Q/2$ and we note that $\rho$ corresponds to the half-sum of positive roots when $S=G/K$, is a rank one symmetric space of noncompact type. It is noteworthy however that rank one symmetric spaces of noncompact type form a very small subclass of the class of Harmonic $NA$ groups.
We will use $e$ to denote the identity element $(\underline{0},1)$ of $S$, where $\underline{0}$ and $1$ are the identity elements of $N$ and $A$ respectively.
$N$ being a stratified Lie group, admits homogeneous norms with respect to the family of dilations $\{\delta_a\mid a\in A\}$ \cite[P.8-10]{FS}. We recall that a continuous function $d:N\to[0,\infty)$ is said to be a homogeneous norm on $N$ with respect to the family of dilations $\{\delta_a\mid a\in A\}$ if $d$ satisfies the following \cite[P.8]{FS}:
\begin{enumerate}
	\item[(i)]$d$ is smooth on $N\setminus\{0\}$;
	\item[(ii)] $d(\delta_a(n))=ad(n)$,  for all  $a>0,\:n\in N$;
	\item [(iii)]$d(n^{-1})=d(n)$, for all  $n\in N$;
	\item[(iv)]$d(n)=0$  if and only if  $n=\underline{0}$.
\end{enumerate}
From \cite[Proposition 1.6]{FS}, we know that for a homogeneous norm $d$ on $N$, there exists a positive constant $\tau_d \geq1$, such that
\begin{equation}\label{quasinorm}
d(nn_1)\leq \tau_d\left[d(n)+d(n_1)\right],\;\:\:\:n\in N, n_1\in N.
\end{equation}
A homogeneous norm $d$ defines a left invariant quasi-metric on $N$, denoted by $\bf{d}$, as follows:
\begin{equation*}
{\bf d}(n_1,n_2)=d(n_1^{-1}n_2),\:\:\:\: n_1\in N,\:n_2\in N.
\end{equation*}
From the definition of $d$ and from (\ref{quasinorm}), the following properties of the associated quasi-metric easily follow:
\begin{enumerate}
\item [i)] ${\bf d}(n_1,n_2)={\bf d}(n_2,n_1),\:\:\:\text{for all}\:\:n_1,\:n_2\in N$.
\item[ii)] ${\bf d}(nn_1,nn_2)={\bf d}(n_1,n_2),\:\:\:\text{for all}\:\:n_1,\:n_2,\:n\in N.$	
\item [iii)] For all $n_1,\:n_2,\:n\in N$
\begin{equation}\label{quasitriangle}
{\bf d}(n_1,n_2)\leq \tau_d\left[{\bf d}(n_1,n)+{\bf d}(n,n_2)\right].
\end{equation}
\end{enumerate}
Any two homogeneous norms $d_1$ and $d_2$ on $N$ are equivalent \cite[P.230]{BLU}. The homogeneous we will be working with is the following \cite[P.1918]{DK}:
\begin{equation}\label{norm}
d(n)=d(X,Z)=(\|X\|^4+16\|Z\|^2)^{\frac{1}{2}},\:\:\:\:\:n=(X,Z)\in N,
\end{equation}
where $\|X\|$, $\|Z\|$ are the usual Euclidean norms of $X\in\mathfrak{v}\cong\R^{2p}$ and $Z\in\mathfrak{z}\cong\R^k$ respectively. For $n\in N$ and $r>0$,  the $d$-ball centered at $n$ with radius $r$ is defined as
\begin{equation*}
B(n,r)=\{n_1\in N\mid{\bf d}(n,n_1)<r\}=\{n_1\in N\mid d(n_1^{-1}n)<r\}.
\end{equation*}
Then $\overline{B}(n,r)=\{n_1\in N\mid {\bf d}(n,n_1)\leq r \}$, is a compact subset of $N$ \cite[Lemma 1.4]{FS}.
So $B(n,r)$ is the left translate by $n$ of the ball $B(\underline{0},r)$, which is the image under the dilation $\delta_r$ of the unit ball $B(\underline{0},1)$. It is also easy to observe that if $B=B(n,t)$ for some $n\in N$, $t>0$, then
\begin{equation*}
\delta_r(B)=B(\delta_r(n),rt),\:\:\:\:\:\text{for all $r>0$}.
\end{equation*}
 For a function $\psi$ defined on $N$, we define for $a>0$,
\begin{equation}\label{dilationoffunction}
\psi_a(n)=a^{-Q}\psi\left(\delta_{{\frac{1}{a}}}(n)\right),\:\:\:\:\:n\in N.
\end{equation}
If $g$ is a measurable function on $N$ and $\mu$ is a measure on $N$, their convolution $\mu\ast g$ is defined by
\begin{equation*}
\mu\ast g(n)=\int_{N}g(n_1^{-1}n)\:d\mu(n_1),
\end{equation*}
provided the integral converges.

We now describe the Laplace-Beltrami operator $\mathcal{L}$ on $S$. Let $\{e_i\mid 1\leq i\leq 2p\}$, $\{e_r\mid 2p+1\leq r\leq 2p+k\}$, $\{e_0\}$ be an orthonormal basis of $\mathfrak{s}$ corresponding to the decomposition $\mathfrak{s}=\mathfrak{n}\oplus\mathfrak{z}\oplus\R$. $E_{l}$ will denote the left-invariant vector field on $S$ determined by $e_{l}$, $0\leq l\leq 2p+k$. From (\cite[Theorem 2.1]{D1},\cite[P. 234]{DR}) it is known that the Laplace-Beltrami operator $\mathcal{L}$ associated to the left-invariant metric $\langle,\rangle_S$ is the form
\begin{equation*}
\mathcal{L}=\sum_{l=0}^{2p+k}E_{l}^2-QE_0.
\end{equation*}
Let $\partial_i,\:\partial_r,\:\partial_a$ be the partial derivatives for the system of coordinates $(X_i,Z_r,a)$ corresponding to $(e_i,e_r,e_0)$. From the definition of left-invariant vector fields it can be shown that
\begin{eqnarray*}
&E_0& =a\partial_a,\\
&E_i&=a\partial_i+\frac{a}{2}\sum_{r=2p+1}^{2p+k}\langle[X,e_i],e_r\rangle\partial_r,\\
&E_r&=a^2\partial_r,\:\:\text{ for }1\leq i\leq 2p,\:\:2p+1\leq r\leq 2p+k.
\end{eqnarray*}
Then $\mathcal{L}$ can be written as \cite[P.234]{DR}
\begin{equation}\label{laplacebeltramionna}
\mathcal{L}=a^2\partial_a^2+\mathcal{L}_a+(1-Q)a\partial_a,
\end{equation}
where \begin{equation*} %\label{la}
\mathcal{L}_a=a(a+\frac{1}{4}\|X\|^2)\sum_{r=2p+1}^{2p+k}\partial_r^2+a \sum_{i=1}^{2p}\partial_i^2+a^2\sum_{2p+1}^{2p+k}\sum_{i=1}^{2p}\langle[X,e_i],e_r\rangle\partial_r\partial_i \:.
\end{equation*}
The Poisson kernel $\mathcal{P}:S\times N\to\C$, corresponding to $\mathcal{L}$ is given by \cite[P.409]{ACD}
\begin{equation*}
\mathcal{P}(x,n)=P_a(n_1^{-1}n),\:\:\:\:\:x=(n_1,a)\in S,\:\:n\in N,
\end{equation*}
where $P$ is the function on $N$ defined by
\begin{equation*} %\label{poissonone}
P(n)=P(X,Z)=\frac{c_{p,k}}{\left(\left(1+\frac{\|X\|^2}{4}\right)^2+\|Z\|^2\right)^Q},\:\:\:\:n=(X,Z)\in N,
\end{equation*}
and $c_{p,k}$ is a positive constant so that $\|P\|_{L^1(N)}=1$. Then dilation of $P$ (see (\ref{dilationoffunction})) is given by,
\begin{equation}\label{poissonna}
P_a(n)=P_a(X,Z)=\frac{c_{p,k}\:a^Q}{\left(\left(a+\frac{\|X\|^2}{4}\right)^2+\|Z\|^2\right)^Q},\:\:\:\:n=(X,Z)\in N,\:\:a>0.
\end{equation}
Simplifying the expression (\ref{poissonna}), we get that
\begin{equation}\label{alternativepoisson}
P_a(n)=P_a(X,Z)=\frac{16^Qc_{p,k}\:a^Q}{\left(16a^2+8a\|X\|^2+d(X,Z)^2\right)^Q},\:\:\:\:n=(X,Z)\in N,\:\:a>0.
\end{equation}
The more general eigenfunctions of $\mathcal{L}$ can be obtained in the following manner: for $\lambda\in\C$, the $\lambda$-Poisson kernel is defined as
\begin{equation}\label{lambdapoisson}
\mathcal{P}_{\lambda}(x,n)=\left[\frac{\mathcal{P}(x,n)}{P(\underline{0})}\right]^{\frac{1}{2}-\frac{i\lambda}{Q}}=\left[\frac{P_a(n_1^{-1}n)}{P(\underline{0})}\right]^{\frac{1}{2}-\frac{i\lambda}{Q}},\:\:\:\:x=(n_1,a)\in S,\:\:n\in N.
\end{equation}
For $\lambda\in\C$, the function $\mathcal{P}_{\lambda}(.,n)$ is an eigenfunction of $\mathcal{L}$ with eigenvalue $-(\lambda^2+\rho^2)$, for each fixed $n\in N$, \cite[P.654]{ADY}.
One can normalize $\mathcal P_{\lambda}$, for $\text{Im}(\lambda)>0$, to define
\begin{equation}\label{normalisedpoisson}
\tilde{\mathcal{P}_{\lambda}}(x,n)=C_{\lambda}\mathcal{P}_{\lambda}(x,n),\:\:\:x\in S,\:n\in N,
\end{equation}
where $C_{\lambda}=c_{p,k}{\bf c}(-\lambda)^{-1}$ and {\bf c} is the generalization of the Harish-Chandra {\bf c}-function. For $\text{Im}(\lambda)>0$, the $\lambda$-Poisson transform of a measure $\mu$ on $N$ is defined by
\begin{equation}\label{plambdamu}
\mathcal{P}_{\lambda}[\mu](n,a)=\int_{N}\tilde{\mathcal{P}_{\lambda}}((n,a),n')\:d\mu(n'),
\end{equation}
whenever the integral converges absolutely for every $(n,a)\in S$. In this case, we say that the $\lambda$-Poisson transform $\mathcal{P}_{\lambda}[\mu]$ of $\mu$ is well-defined. Since for each $n\in N$, $\tilde{\mathcal{P}_{\lambda}}(.,n)$ is an eigenfunction of $\mathcal{L}$ with eigenvalue $-(\lambda^2+\rho^2)$, it follows that $\mathcal{P}_{\lambda}[\mu]$ is also an eigenfunction of $\mathcal{L}$ with the same eigenvalue provided $\mathcal{P}_{\lambda}[\mu]$ is well-defined. Using the definition of $\mathcal{P}_{\lambda}$ given in (\ref{lambdapoisson}) and the relation (\ref{normalisedpoisson}), it can be easily shown that for $\text{Im}(\lambda)>0$, $x=(n_1,a)\in S$ and $n\in N$,
\begin{equation}\label{relationbetweenpandq}
\tilde{\mathcal{P}_{\lambda}}(x,n)=a^{\frac{Q}{2}+i\lambda}q^{\lambda}_a(n_1^{-1}n)
\end{equation}
where
\begin{equation*} %\label{qlambda}
q^{\lambda}(n):=C_{\lambda}\left[P(\underline{0})^{-1}P(n)\right]^{\frac{1}{2}-\frac{i\lambda}{Q}},\:\:\: q^{\lambda}_a(n)=a^{-Q}q^{\lambda}\left(\delta_{a^{-1}}(n)\right),
\end{equation*}
$\text{Im}(\lambda)>0$, $n\in N$ and $a\in A$. An explicit expression of the function $q_a^{\lambda}$, $\text{Im}(\lambda)>0$, obtained from (\ref{alternativepoisson}) is given by,
\begin{equation*} %\label{qbetaexpression}
q^{\lambda}_a(n)=c_{\lambda}\:\frac{a^{-2i\lambda}}{\left(16a^2+8a\|X\|^2+d(n)^2\right)^{\frac{Q}{2}-i\lambda}},\:\:\:n=(X,Z)\in N,\:a\in A,
\end{equation*}
where $c_{\lambda}=16^{\rho-i\lambda}c_{p,k}{\bf c}(-\lambda )^{-1}$.
If $\text{Im}(\lambda)>0$, then it is easily follows that
\begin{equation*}
\int_{N}q^{\lambda}(n)\:dm(n)=1.
\end{equation*}
 For a measure $\mu$ on $N$ and $\text{Im}(\lambda)>0$, we define the convolution integral
\begin{equation*} %\label{qlamconvmu}
\mathcal{Q}_{\lambda}[\mu](n,a):=\mu\ast q^{\lambda}_a(n)=a^{-Q}\int_{N}q^{\lambda}\left(\delta_{a^{-1}}(n_1^{-1}n)\right)\:d\mu(n_1),
\end{equation*}
whenever the integral converges absolutely for every $(n,a)\in S$ and say that $\mathcal{Q}_{\lambda}[\mu]$ is well-defined. From the definition of the $\lambda$-Poisson integral (\ref{plambdamu}) and (\ref{relationbetweenpandq}), it follows that for a measure $\mu$ with well-defined $\mathcal{P}_{\lambda}[\mu]$ with $\text{Im}(\lambda)>0$,
\begin{equation}\label{plamdaandqlamda}
\mathcal{P}_{\lambda}[\mu](n,a)=a^{\frac{Q}{2}+i\lambda}\mathcal{Q}_{\lambda}[\mu](n,a),\:\:\:\text{for all}\:\:(n,a)\in S.
\end{equation}
In this paper we will be interested only in the case $\lambda=i\beta$, for $\beta>0$.
\begin{definition}
 For $\beta >0$, $\M_{\beta}$ is defined to be the set of all radon measures $\mu$ on $N$ such that $\mathcal{P}_{i\beta}[|\mu|]$ (equivalently $\mathcal{Q}_{i\beta}[|\mu|]$) is well-defined.
\end{definition}
  If $\mu\in \M_{\beta}$ then for all $n=(X,Z)\in N$, $a\in A$, we have the explicit expression
\begin{equation} \label{explicitqbetamu}
\mathcal{Q}_{i\beta}[\mu](n,a) = c_{\beta}\:a^{2\beta}\int_{N}\frac{1}{\left(16a^2+8a\|X-X_1\|^2+d\left((X_1,Z_1)^{-1}(X,Z)\right)^2\right)^{\rho+\beta}}\:d\mu(X_1Z_1),
\end{equation}
where
\begin{equation*}
c_{\beta}=16^{\rho+\beta}C_{i\beta}=16^{\rho+\beta}c_{p,k}{\bf c}(-i\beta)^{-1}>0.
\end{equation*}
We now state some important definitions.
\begin{definition}\label{basicdefn}
\begin{enumerate}
\item[i)] A function $u$ defined on $S$ is said to have admissible limit $L\in\C$, at $n_0\in N$, if for each $\alpha>0$
\begin{equation*}
\lim_{\substack{a\to 0\\(n,a)\in \Gamma_{\alpha}(n_0)}}u(n,a)=L,
\end{equation*}
where
\begin{equation}\label{admissibledomain}
\Gamma_{\alpha}(n_0):=\{(n,a)\in S\mid{\bf d}(n_0,n)<\alpha a\}=\{(n,a)\in S\mid d(n_0^{-1}n)<\alpha a\}
\end{equation}
is called the admissible domain with vertex at $n_0$ and aperture $\alpha$.
\item[ii)] Given a measure $\mu$ on $N$, we say that $\mu$ has strong derivative $L\in[0,\infty)$ at $n_0\in N$ if
\begin{equation*}
\lim_{r\to 0}\frac{\mu(n_0\delta_r(B))}{m(n_0\delta_r(B))}=L,
\end{equation*}
holds for every $d$-ball $B\subset N$. The strong derivative of $\mu$ at $n_0$, if it exists, is denoted by $D\mu(n_0)$.
\item[iii)] A sequence of complex valued functions $\{u_j\}_{j\in\N}$ defined on $S$ is said to converge normally to a function $u$ if $\{u_j\}$ converges to $u$ uniformly on all compact subsets of $S$.
\item[iv)] A sequence of complex valued functions $\{u_j\}_{j\in\N}$ defined on $S$ is said to be locally bounded if given any compact set $K\subset S$, there exists a positive constant $C_K$ (depending only on $K$) such that for all $j\in\N$ and all $x\in K$,
\begin{equation*}
|u_j(x)|\leq C_K.
\end{equation*}
\item [v)] For a differential operator $D$ on $S$, a smooth function $u$ on $S$ satisfying $Du=0$ is said to be a $D$-harmonic function.
\end{enumerate}
\end{definition}
For the notion of admissible convergence in the context of Riemannian symmetric spaces of noncompact type with real rank one we refer the reader to \cite[Page  158]{KP}.
Our main aim would be to conclude existence of admissible limit from the weaker assumption of existence of limits along some curves lying on
\begin{equation*}
%\label{del_gamma}
\partial \Gamma_{\alpha}(n_0):=\{(n,a)\in S\mid{\bf d}(n_0,n)=\alpha a\}=\{(n,a)\in S\mid d(n_0^{-1}n)=\alpha a\} \:.
\end{equation*}
Then using the explicit expression (\ref{norm}), for $n_0=\underline{0}$, we can rewrite $\partial \Gamma_{\alpha}(\underline{0})$ as,
\begin{equation*}
%\label{alt_del_gamma}
\partial \Gamma_{\alpha}(\underline{0})= \{(\sqrt{\alpha a|\cos \theta|}\omega, \frac{1}{4} \alpha a |\sin \theta| \zeta,a): \omega \in S^{2p-1}, \zeta \in S^{k-1}, \theta \in [0,2 \pi),a>0\}
\end{equation*}
where $S^{m-1}$ denotes the unit sphere of $\R^m$, with respect to the Euclidean norm. Now we define the curves we are interested in.
\begin{definition}
\label{sectordefn}
\begin{enumerate}
\item[i)] For a fixed 4-tuple $(\alpha, \omega, \zeta, \theta)$ where $\alpha \in (0, +\infty), \: \omega \in S^{2p-1}, \: \zeta \in S^{k-1},\: \theta \in [0,2 \pi)$, the ray emanating from $\underline{0}$ is defined by the curve,
\begin{eqnarray*}
%\label{ray_at_origin}
&& \gamma_{\underline{0},\alpha,\omega,\zeta,\theta} : (0,\infty) \to S, \:\:  \\
&& \gamma_{\underline{0},\alpha,\omega,\zeta,\theta}(a):= (\sqrt{\alpha a|\cos \theta|}\omega, \frac{1}{4} \alpha a |\sin \theta| \zeta,a) \:.
\end{eqnarray*}
Note that for $\alpha>0$,
\begin{equation}
\label{union}
\partial \Gamma_{\alpha}(\underline{0})= \bigcup\limits_{\omega \in S^{2p-1}} \: \bigcup\limits_{\zeta \in S^{k-1}} \:\bigcup\limits_{0 \le \theta < 2 \pi} \:\gamma_{\underline{0},\alpha,\omega,\zeta,\theta}(0,\infty) \:.
\end{equation}
For a general $n_0 \in N$, we translate our rays by $n_0$. More precisely, for a fixed 4-tuple $(\alpha, \omega, \zeta, \theta)$ where $\alpha \in (0, +\infty), \: \omega \in S^{2p-1}, \: \zeta \in S^{k-1},\: \theta \in [0,2 \pi)$, the ray emanating from $n_0$ is defined by,
\begin{eqnarray*}
%\label{ray_at_n_0}
&& \gamma_{n_0,\alpha,\omega,\zeta,\theta} : (0,+\infty) \to S, \:\:   \\
&& \gamma_{n_0,\alpha,\omega,\zeta,\theta}(a):= (n_0(\sqrt{\alpha a|\cos \theta|}\omega, \frac{1}{4} \alpha a |\sin \theta| \zeta),a) \:.
\end{eqnarray*}
By the left invariance of the quasi-metric it follows that, each $\gamma_{n_0,\alpha,\omega,\zeta,\theta} \in \partial \Gamma_{\alpha}(n_0)$ and in fact,
\begin{equation*}
%\label{union_at_n_0}
\partial \Gamma_{\alpha}(n_0)= \bigcup\limits_{\omega \in S^{2p-1}} \: \bigcup\limits_{\zeta \in S^{k-1}} \:\bigcup\limits_{0 \le \theta < 2 \pi} \:\gamma_{n_0,\alpha,\omega,\zeta,\theta}(0,\infty) \:.
\end{equation*}
\item[ii)] A function $u$ defined on $S$ is said to have a radial limit $L \in \C$ at $n_0 \in N$ if
\begin{equation*}
\displaystyle\lim_{a \to 0} u(\gamma_{n_0,\alpha,\omega,\zeta,\theta}(a))= L \:,
\end{equation*}
for some ray $\gamma_{n_0,\alpha,\omega,\zeta,\theta}$.
\item[iii)] A sector of $\partial \Gamma_{\alpha}(n_0)$ is defined to be
\begin{equation*}
%\label{sector}
\mathscr S =  \bigcup\limits_{\omega \in \mathcal O_1} \: \bigcup\limits_{\zeta \in \mathcal O_2} \:\bigcup\limits_{\theta \in \mathcal O_3} \:\gamma_{n_0,\alpha,\omega,\zeta,\theta}(0,\infty) \:,
\end{equation*}
where $\mathcal O_1,\mathcal O_2,\mathcal O_3$ are relatively open subsets of $S^{2p-1},S^{k-1},[0,2 \pi)$ respectively. Note that such a sector becomes a relatively open subset of $\partial \Gamma_{\alpha}(n_0)$.
\item[iv)]  A function $u$ defined on $S$ is said to have a sectorial limit $L \in \C$ at $n_0 \in N$, if $u$ has radial limit $L$ along all rays in a sector of $\partial \Gamma_{\alpha}(n_0)$, for some $\alpha >0$ \:.
\end{enumerate}
\end{definition}

\section{some auxiliary results}
\begin{definition}
Given $\beta>0$, we define a second order differential operator $\mathcal{L}^{\beta}$ on $S$ having the same second order term as the Laplace-Beltrami operator $\mathcal{L}$ by the formula
\begin{equation*}
\mathcal{L}^{\beta}=a^2\partial_a^2+\mathcal{L}_a+(1-2\beta)a\partial_a,
\end{equation*}
where $\mathcal{L}$ is given by (\ref{laplacebeltramionna}).
\end{definition}
When $\beta=\rho=Q/2$, we recover $\mathcal{L}$ (see \cite[Theorem 3.2]{DK}). We note that
\begin{equation}\label{diflandlbeta}
\mathcal{L}^{\beta}-\mathcal{L}=2(\rho-\beta)a\partial_a=2(\rho-\beta)E_0.
\end{equation}
We recall that $E_0=a\partial_a$ is the left-invariant vector field on $S$ corresponding to the basis element $e_0=(0,0,1)$ of $\mathfrak{s}$ and hence $\mathcal{L}^{\beta}$ is left $S$-invariant. The following lemma shows that there is a one to one correspondence between the eigenfunctions of $\mathcal{L}$ with eigenvalue $\beta^2-\rho^2$ and $\mathcal{L}^{\beta}$-harmonic functions (as defined in Definition \ref{basicdefn}, v)).
\begin{lemma}[\cite{Ray}, Lemma 3.4]
\label{lbetaharmonic}
Let $\beta>0$ and let $u$ be a smooth function on $S$. Then $u$ is an eigenfunction of $\mathcal{L}$ with eigenvalue $\beta^2-\rho^2$, if and only if the function $(n,a)\mapsto a^{\beta-\rho}u(n,a)$, is $\mathcal{L}^{\beta}$-harmonic.
\end{lemma}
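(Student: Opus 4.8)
The plan is to reduce the claimed equivalence to a single \emph{conjugation identity} relating $\mathcal{L}^{\beta}$ to $\mathcal{L}-(\beta^2-\rho^2)$ under multiplication by the smooth, nowhere vanishing weight $a^{\beta-\rho}$. Concretely, writing $s=\beta-\rho$, I would prove that for every smooth $u$ on $S$,
\begin{equation*}
\mathcal{L}^{\beta}\!\left(a^{s}u\right)=a^{s}\left(\mathcal{L}u-(\beta^2-\rho^2)u\right).
\end{equation*}
Since $a^{s}>0$ everywhere and $u\mapsto a^{s}u$ is a bijection of smooth functions on $S$, this identity shows at once that $a^{s}u$ is $\mathcal{L}^{\beta}$-harmonic if and only if $\mathcal{L}u=(\beta^2-\rho^2)u$, which is exactly the asserted one-to-one correspondence. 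Thus the lemma is purely a computation; no analytic input is required.

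To establish the identity I would start from the decomposition (\ref{diflandlbeta}), that is $\mathcal{L}^{\beta}=\mathcal{L}+2(\rho-\beta)E_0=\mathcal{L}-2s\,a\partial_a$, and evaluate $\mathcal{L}(a^{s}u)$ by a second-order Leibniz rule. The crucial structural observations are that $\mathcal{L}_a$ carries only the $N$-derivatives $\partial_i,\partial_r$ (its coefficients depend on $a$ and $X$, but it contains no $\partial_a$), and that the principal part of $\mathcal{L}$ has the pure term $a^2\partial_a^2$ with \emph{no} mixed derivatives $\partial_a\partial_i$ or $\partial_a\partial_r$. Consequently $\mathcal{L}_a(a^{s}u)=a^{s}\mathcal{L}_a u$, the function $a^{s}$ (depending on $a$ alone) satisfies $\mathcal{L}(a^{s})=s(s-Q)a^{s}$, and the only Leibniz cross-term is the one produced by $a^2\partial_a^2$, namely $2a^2(\partial_a a^{s})(\partial_a u)=2s\,a^{s+1}\partial_a u$. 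Collecting these,
\begin{equation*}
\mathcal{L}(a^{s}u)=s(s-Q)\,a^{s}u+a^{s}\mathcal{L}u+2s\,a^{s+1}\partial_a u.
\end{equation*}

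Finally I would subtract the first-order correction. Since $a\partial_a(a^{s}u)=s\,a^{s}u+a^{s+1}\partial_a u$, the term $-2s\,a\partial_a(a^{s}u)$ contributes $-2s^{2}a^{s}u-2s\,a^{s+1}\partial_a u$; the two copies of $2s\,a^{s+1}\partial_a u$ cancel, and the coefficient of $a^{s}u$ collapses to $s(s-Q)-2s^{2}=-s(s+Q)=-(\beta-\rho)(\beta+\rho)=-(\beta^2-\rho^2)$, using $Q=2\rho$. This is the conjugation identity. I do not expect any genuine obstacle: the content is entirely algebraic, and the one point demanding care is precisely this bookkeeping — checking that the stray first-order term $2s\,a^{s+1}\partial_a u$ cancels and that the zeroth-order coefficient telescopes to exactly $-(\beta^2-\rho^2)$. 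As a safeguard I would cross-check by the direct route, expanding $a^2\partial_a^2(a^{s}u)$ and $(1-2\beta)a\partial_a(a^{s}u)$ straight into $\mathcal{L}^{\beta}=a^2\partial_a^2+\mathcal{L}_a+(1-2\beta)a\partial_a$: there the coefficient of $a\partial_a u$ comes out as $2s+(1-2\beta)=1-Q$, confirming the $\mathcal{L}$-structure, while the coefficient of $u$ comes out as $s(s-1)+(1-2\beta)s=s(s-2\beta)=-(\beta^2-\rho^2)$, in agreement.
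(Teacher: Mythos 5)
Your proposal is correct: the conjugation identity $\mathcal{L}^{\beta}(a^{\beta-\rho}u)=a^{\beta-\rho}\bigl(\mathcal{L}u-(\beta^2-\rho^2)u\bigr)$ holds, and your coefficient bookkeeping checks out — $\mathcal{L}_a(a^{s}u)=a^{s}\mathcal{L}_a u$ since $\mathcal{L}_a$ involves only the $\partial_i,\partial_r$ derivatives, the first-order terms in $a\partial_a u$ cancel to $(1-Q)$, and the zeroth-order coefficient $s(s-2\beta)=-(\beta^2-\rho^2)$ with $Q=2\rho$. Note that the paper itself gives no proof of this lemma (it is quoted from [Ray, Lemma 3.4]), so your computation is exactly the kind of direct, purely algebraic argument the citation stands in for, and it is complete as written.
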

From (\ref{diflandlbeta}) we see that $\mathcal{L}^{\beta}$ and $\mathcal{L}$ differ by a first order term and hence $\mathcal{L}^{\beta}$ is an ellliptic operator. Application of a vast generalization of Montel's theorem, valid for solutions of hypoelliptic operators (and hence for $\mathcal{L}^{\beta}$-harmonic functions) proved in \cite[Theorem 4]{B}, yields the following result.
\begin{lemma}
\label{our_bar}
Let $\beta>0$, and let $\{F_j\}$ be a sequence of $\mathcal{L}^{\beta}$-harmonic functions on $S$. If $\{F_j\}$ is locally bounded then it has a subsequence which converges normally to a $\mathcal{L}^{\beta}$-harmonic function $F$.
\end{lemma}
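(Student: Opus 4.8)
The plan is to deduce the statement directly from the hypoelliptic analogue of Montel's theorem recorded as \cite[Theorem 4]{B}, once we check that $\mathcal{L}^{\beta}$ lies within its scope. The only structural input required is that $\mathcal{L}^{\beta}$ is \emph{hypoelliptic}. This is immediate from what has already been noted: using (\ref{diflandlbeta}) the operator $\mathcal{L}^{\beta}$ shares its second order part with the Laplace--Beltrami operator $\mathcal{L}$ and differs from it only by the first order term $2(\rho-\beta)E_0$, so $\mathcal{L}^{\beta}$ is a second order elliptic operator with smooth coefficients, and every such operator is hypoelliptic. In particular every $\mathcal{L}^{\beta}$-harmonic function is automatically smooth, in accordance with Definition \ref{basicdefn}, v).

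With hypoellipticity in hand I would simply verify and invoke the hypotheses of \cite[Theorem 4]{B}. The family $\{F_j\}$ consists of solutions of the single equation $\mathcal{L}^{\beta}F_j=0$ on the fixed open manifold $S$, and by hypothesis it is locally bounded in the sense of Definition \ref{basicdefn}, iv): for every compact $K\subset S$ there is a constant $C_K$ with $|F_j(x)|\le C_K$ for all $j\in\N$ and all $x\in K$. This is exactly the uniform local boundedness needed in the generalized Montel theorem, whose conclusion then furnishes a subsequence $\{F_{j_l}\}$ that converges uniformly on every compact subset of $S$ --- that is, normally in the sense of Definition \ref{basicdefn}, iii) --- to a limit $F$, together with the assertion that $F$ again solves $\mathcal{L}^{\beta}F=0$ and is therefore $\mathcal{L}^{\beta}$-harmonic.

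I do not expect any real obstacle here, since the entire content is the verification of hypoellipticity, which is routine once ellipticity is known. For completeness I would note that the cited theorem can be bypassed by a self-contained argument: interior elliptic a priori estimates for $\mathcal{L}^{\beta}$ bound every derivative of $F_j$ on a compact set $K$ in terms of the sup norm of $F_j$ on a slightly larger compact neighbourhood, so local boundedness of $\{F_j\}$ upgrades to local boundedness of all derivatives; the Arzel\`a--Ascoli theorem together with a diagonal extraction then produces a subsequence converging in $C^\infty_{\mathrm{loc}}(S)$, and passing to the limit in the identity $\mathcal{L}^{\beta}F_{j_l}=0$ shows that the limit $F$ is $\mathcal{L}^{\beta}$-harmonic.
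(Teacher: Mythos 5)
Your proposal is correct and follows essentially the same route as the paper: the paper likewise observes via (\ref{diflandlbeta}) that $\mathcal{L}^{\beta}$ differs from $\mathcal{L}$ only by a first order term, hence is elliptic (so hypoelliptic), and then applies the generalized Montel theorem of \cite[Theorem 4]{B} to the locally bounded sequence of $\mathcal{L}^{\beta}$-harmonic functions. Your supplementary remark that one could bypass the citation with interior elliptic estimates and Arzel\`a--Ascoli is a valid alternative but is not needed, and is not what the paper does.
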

We recall that if $\beta>0$, and $\mu\in M_{\beta}$ is positive then $\mathcal{P}_{i\beta}[\mu]$ is a positive eigenfunction of the Laplace-Beltrami operator $\mathcal{L}$ with eigenvalue $\beta^2-\rho^2$. Characterization of such positive eigenfunctions was proved by Damek and Ricci in \cite[Theorem 7.11]{DR}.
\begin{lemma}
\label{positiveeigen}
Suppose $u$ is a positive eigenfunction of the Laplace-Beltrami operator $\mathcal{L}$ on the Harmonic $NA$ group $S$ with eigenvalue $\beta^2-\rho^2$ for some $\beta>0$. Then there exists a unique positive measure $\mu$ on $N$ and a unique nonnegative constant $C$ such that
\begin{equation}\label{repofu}
u(n,a)=Ca^{\beta+\rho}+\mathcal{P}_{i\beta}[\mu](n,a),\:\:\:\:\text{for all}\:\:(n,a)\in S.
\end{equation}
In this case, the measure $\mu$ is called the boundary measure of $u$.
\end{lemma}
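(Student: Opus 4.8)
The plan is to exhibit the cone of positive eigenfunctions as a compact convex set and to obtain \eqref{repofu} as its Choquet (Martin) integral representation over the geometric boundary of $S$, which I will identify with $N\cup\{\infty\}$. The orienting computation is that for a pure power $a^{s}$ one has $\mathcal{L}(a^{s})=s(s-Q)a^{s}$, so the only powers of $a$ solving $\mathcal{L}u=(\beta^{2}-\rho^{2})u$ are the two indicial solutions $a^{\rho+\beta}$ and $a^{\rho-\beta}$. Moreover $a^{\rho-\beta}$ is \emph{already} a Poisson integral: by \eqref{plamdaandqlamda} together with $\int_{N}q^{i\beta}\,dm=1$ one gets $\mathcal{P}_{i\beta}[m]=a^{\rho-\beta}$, where $m$ is Haar measure. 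Thus only the term $a^{\rho+\beta}$, attached to the direction $a\to\infty$, is genuinely new, and I expect the extreme rays of the cone to be exactly the normalized Poisson kernels $\mathcal{P}_{i\beta}(\cdot,n)$ for $n\in N$ together with $a^{\rho+\beta}$.

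First I would establish compactness. Since $\mathcal{L}$ is the elliptic Laplace--Beltrami operator, positive solutions of $\mathcal{L}u=(\beta^{2}-\rho^{2})u$ satisfy Harnack's inequality, so the set
\begin{equation*}
\mathcal{C}=\{\,u:\ \mathcal{L}u=(\beta^{2}-\rho^{2})u,\ u>0,\ u(e)=1\,\}
\end{equation*}
is locally uniformly bounded. Passing through the correspondence of Lemma~\ref{lbetaharmonic} and applying the Montel-type theorem of Lemma~\ref{our_bar}, $\mathcal{C}$ is compact in the topology of normal convergence; being also convex, Choquet's theorem provides for each $u\in\mathcal{C}$ a probability measure $\nu_{u}$ carried by the extreme points of $\mathcal{C}$ with barycentre $u$.

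The heart of the matter --- and the step I expect to be the main obstacle --- is to identify the extreme points of $\mathcal{C}$, i.e.\ the minimal positive eigenfunctions. Here I would invoke Martin boundary theory for the elliptic operator $\mathcal{L}-(\beta^{2}-\rho^{2})$ and prove that its Martin compactification coincides with the geometric boundary $N\cup\{\infty\}$, that every Martin kernel is minimal, and that these kernels are precisely the normalized $\mathcal{P}_{i\beta}(\cdot,n)$, $n\in N$, and $a^{\rho+\beta}$. Minimality of each kernel can be handled by a boundary Harnack principle combined with the explicit form of $P_{a}$ in \eqref{alternativepoisson}; the genuinely hard part is \emph{completeness}, namely ruling out any further minimal eigenfunction. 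This reduces to controlling the ratios of the Green function along arbitrary sequences leaving every compact set of $S$ and showing that near each boundary direction they converge to one of the two indicial profiles $a^{\rho\pm\beta}$, which is where the fine geometry of the negatively curved space $S$ enters.

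Granting the identification of the extreme points, the conclusion assembles quickly. Splitting $\nu_{u}=C\,\delta_{\infty}+\nu_{u}|_{N}$ and transporting the restriction through the map $n\mapsto\mathcal{P}_{i\beta}(\cdot,n)$ produces a positive measure $\mu$ on $N$ with $u(n,a)=C\,a^{\rho+\beta}+\mathcal{P}_{i\beta}[\mu](n,a)$, which is \eqref{repofu} after undoing the normalization $u(e)=1$; the constant $C=\nu_{u}(\{\infty\})\ge 0$. For uniqueness I would argue concretely rather than appeal to the simplex property: by \eqref{plamdaandqlamda},
\begin{equation*}
a^{\beta-\rho}u(\cdot,a)=C\,a^{2\beta}+\mu\ast q^{i\beta}_{a},
\end{equation*}
and since $\{q^{i\beta}_{a}\}$ is an approximate identity as $a\to 0$, the right-hand side converges weak-$\ast$ to $\mu$ while $C\,a^{2\beta}\to 0$. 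Hence $\mu$ is determined by $u$, and then $C\,a^{\rho+\beta}=u-\mathcal{P}_{i\beta}[\mu]$ determines $C$, giving the asserted uniqueness.
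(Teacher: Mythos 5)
Your overall strategy --- Choquet representation over the cone of normalized positive eigenfunctions, with the extreme rays claimed to be the kernels $\mathcal{P}_{i\beta}(\cdot,n)$, $n\in N$, together with $a^{\rho+\beta}$ --- is a legitimate classical route, and your preliminary computations are sound: $\mathcal{L}(a^{s})=s(s-Q)a^{s}$ does give the indicial exponents $\rho\pm\beta$; the identity $\mathcal{P}_{i\beta}[m]=a^{\rho-\beta}$ is correct; the Harnack--plus--Lemma \ref{our_bar} compactness of the normalized cone is unproblematic; and your approximate-identity argument for uniqueness is essentially right (though for an infinite Radon measure $\mu\in\M_{\beta}$ the vague convergence $\mu\ast q^{i\beta}_{a}\to\mu$ requires the tail control of Lemma \ref{integral_finite}, not just the finite-measure case). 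The genuine gap is exactly where you predict ``the main obstacle'': the identification of the extreme points. You reduce the lemma to the assertion that the Martin compactification of $S$ for the operator $\mathcal{L}-(\beta^{2}-\rho^{2})$ is $N\cup\{\infty\}$, that every Martin kernel is minimal, and that no further minimal eigenfunctions exist --- and then you do not prove any of this, saying only that it ``reduces to controlling the ratios of the Green function.'' That reduction is a restatement of the theorem, not a step of its proof. For an eigenvalue strictly below the bottom of the $L^{2}$-spectrum on a negatively curved homogeneous space, identifying the full Martin boundary (in particular ruling out extra minimal functions) is a substantial piece of potential theory; without it, Choquet's theorem only yields a representing measure carried by an unidentified closure of extreme points, from which \eqref{repofu} cannot be extracted.

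It is also worth noting how this compares with the paper: the paper does not prove this lemma at all, but quotes it as Theorem 7.11 of Damek and Ricci \cite{DR}. So your proposal is, in effect, a program for reproving the cited theorem, whose one unexecuted step is of the same depth as the theorem itself. To complete it you would need either to carry out the Martin boundary identification for $\mathcal{L}-(\beta^{2}-\rho^{2})$ on $S$ in full (boundary Harnack principle, barrier or Green-function-ratio estimates at the boundary point at infinity and at points of $N$), or to bypass Choquet theory altogether in favour of the direct representation argument of \cite{DR}. As it stands, the write-up should be regarded as a plausible outline rather than a proof.
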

\begin{remark}\label{invarianceofadmissible}
Next, we consider the natural action of the subgroup $A$ on $S$ (see (\ref{nonisotropic})):
\begin{equation}\label{aaction}
r\cdot (n,a)=(\delta_r(n),ra),\:\:\:\:r\in A,\:(n,a)\in S.
\end{equation}
We note that each ray (see definition \ref{sectordefn}) emanating from $\underline{0}$ is invariant under this action. Indeed for every $r>0$,
\begin{eqnarray*}
&& r\cdot \gamma_{\underline{0},\alpha,\omega,\zeta,\theta}(a) \\
& = & (\delta_r(\sqrt{\alpha a|\cos \theta|}\omega, \frac{1}{4} \alpha a |\sin \theta| \zeta),ra) \\
& = & (\sqrt{r\alpha a|\cos \theta|}\omega, \frac{1}{4} r\alpha a |\sin \theta| \zeta ,ra) \\
&=& \gamma_{\underline{0},\alpha,\omega,\zeta,\theta}(ra) \:,
\end{eqnarray*}
for all $\alpha >0$, for all $\omega \in S^{2p-1}$, for all $\zeta \in S^{k-1}$ and for all $\theta \in [0,2 \pi )$. Then by (\ref{union}), each $\partial \Gamma_{\alpha}(\underline{0})$ being a union of such rays, is invariant under this action. Then since for any $\alpha >0$,
\begin{equation*}
\Gamma_{\alpha}(\underline{0}) = \{(\underline{0},a): a \in (0, +\infty)\} \bigcup \bigcup\limits_{0< \alpha' <\alpha} \partial \Gamma_{\alpha'}(\underline{0}) \:,
\end{equation*}
we get that each admissible domain $\Gamma_{\alpha}(\underline{0})$,with vertex at $\underline{0}$, is also invariant under this action.
\end{remark}

Given a function $F$ on $S$ and $r>0$, we define the dilation $F_r$ of $F$ by
\begin{equation*}
F_r(n,a):=F\left(\delta_r(n),ra\right),\:\:\:\:(n,a)\in S.
\end{equation*}
The class of $\mathcal{L}^{\beta}$-harmonic functions are preserved under this dilation.
\begin{lemma}[\cite{Ray}, Lemma 3.9]
\label{dilationof_Lbeta}
Let $\beta>0$. If $F$ is an $\mathcal{L}^{\beta}$-harmonic functions on $S$ then so is $F_r$, for every $r>0$.
\end{lemma}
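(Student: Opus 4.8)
The plan is to recognize the dilation $F_r$ as nothing but a left translation of $F$ by a single element of $S$, and then to invoke the left $S$-invariance of $\mathcal{L}^{\beta}$ already recorded after (\ref{diflandlbeta}). Concretely, I would first look at the element $(\underline{0},r)=(0,0,r)\in S$ and compute, from the multiplication law on $S$,
\begin{equation*}
(0,0,r)(X,Z,a)=\left(\sqrt{r}\,X,\,rZ,\,ra\right)=\left(\delta_r(X,Z),ra\right),
\end{equation*}
so that $(\underline{0},r)(n,a)=(\delta_r(n),ra)$ for every $(n,a)\in S$. Denoting by $L_g$ the operator of left translation by $g\in S$, this identifies the dilation exactly as
\begin{equation*}
F_r=F\circ L_{(\underline{0},r)}.
\end{equation*}

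With this identification in hand the conclusion is immediate. Since $E_0=a\partial_a$ is left-invariant and $\mathcal{L}$, being the Laplace-Beltrami operator of a left-invariant metric, commutes with left translations, the operator $\mathcal{L}^{\beta}=\mathcal{L}+2(\rho-\beta)E_0$ from (\ref{diflandlbeta}) is left $S$-invariant; that is, $\mathcal{L}^{\beta}(G\circ L_g)=(\mathcal{L}^{\beta}G)\circ L_g$ for every smooth $G$ and every $g\in S$. Applying this with $G=F$ and $g=(\underline{0},r)$, and using that $F$ is $\mathcal{L}^{\beta}$-harmonic, I would obtain
\begin{equation*}
\mathcal{L}^{\beta}F_r=\mathcal{L}^{\beta}\left(F\circ L_{(\underline{0},r)}\right)=\left(\mathcal{L}^{\beta}F\right)\circ L_{(\underline{0},r)}=0.
\end{equation*}
Smoothness of $F_r$ is clear because $L_{(\underline{0},r)}$ is a diffeomorphism of $S$, so $F_r$ is $\mathcal{L}^{\beta}$-harmonic, as desired.

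The only real content is the first identification, so I expect the modest effort to concentrate on verifying the group-law computation $(\underline{0},r)(n,a)=(\delta_r(n),ra)$; once that is checked, invariance does all the work. Should one prefer to bypass the structural argument, the alternative is to compute $\mathcal{L}^{\beta}F_r$ directly from the coordinate expression $\mathcal{L}^{\beta}=a^2\partial_a^2+\mathcal{L}_a+(1-2\beta)a\partial_a$ by the chain rule, tracking how the substitution $(n,a)\mapsto(\delta_r(n),ra)$ interacts with the $\|X\|^2$-dependent coefficients of $\mathcal{L}_a$. There the scaling weights of $\delta_r$ (namely $\sqrt{r}$ on $\mathfrak{v}$, $r$ on $\mathfrak{z}$, and $r$ on $a$) must be matched against the factors $a$ and $a^2$ appearing in $\mathcal{L}_a$, and this bookkeeping is the genuinely delicate step of the direct route; the left-invariance argument is preferable precisely because it makes these cancellations automatic.
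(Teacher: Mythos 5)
Your proof is correct: the group-law computation $(\underline{0},r)(X',Z',a')=(\sqrt{r}X',rZ',ra')=(\delta_r(X',Z'),ra')$ is exactly right, so $F_r=F\circ L_{(\underline{0},r)}$, and the left $S$-invariance of $\mathcal{L}^{\beta}$ recorded after (\ref{diflandlbeta}) then gives $\mathcal{L}^{\beta}F_r=(\mathcal{L}^{\beta}F)\circ L_{(\underline{0},r)}=0$ at once. This is essentially the same argument as in the source the paper cites for this lemma (\cite{Ray}, Lemma 3.9): the dilation is a left translation by an element of $A\subset S$, and left-invariance does all the work.
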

Following \cite{Ray} we define the Hardy-Littlewood maximal function $M_{HL}(\mu)$ of $\mu$ by,
\begin{equation*}
M_{HL}(\mu)(n)= \displaystyle\sup_{r>0} \frac{|\mu|(B(n,r))}{m(B(n,r))} , \: n \in N.
\end{equation*}
\begin{lemma}
\label{HL}
For $\beta >0$, if $\mu \in M_{\beta}$ then for all $\alpha >0$, for all $\eta >0$, there exist positive constants $ C_{\alpha,\beta}$ and $C'_{\eta, \beta}$ such that for all $n_0 \in N$, the following inequalities are true.
\begin{eqnarray*}
&& (a) C_{\alpha,\beta} \frac{|\mu|(B(n_0,a))}{m(B(n_0,a))} \le \mathcal{Q}_{i \beta}[|\mu|](\gamma_{n_0,\alpha,\omega,\zeta,\theta}(a)) \:, \: \text{for all} \: a>0\:, \omega \in S^{2p-1} ,\: \zeta \in S^{k-1}, \: \theta \in [0,2\pi) \:. \\
&& (b) \displaystyle\sup_{(n,a) \in \Gamma_{\eta}(n_0)} \left| \mathcal{Q}_{i \beta}[\mu](n,a) \right| \le C'_{\eta,\beta} M_{HL}(\mu)(n_0) \:.
\end{eqnarray*}
\end{lemma}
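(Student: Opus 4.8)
The plan is to work directly from the explicit expression (\ref{explicitqbetamu}) for $\mathcal{Q}_{i\beta}[|\mu|]$, using the homogeneity $m(B(n,r))=c_0r^Q$ (with $c_0=m(B(\underline 0,1))$, by (\ref{measuredilation}) and left-invariance) together with the quasi-triangle inequality (\ref{quasinorm}) and the symmetry $d(n^{-1})=d(n)$. Throughout I write $n=(X,Z)$, $n_0=(X_0,Z_0)$, $n_1=(X_1,Z_1)$. First I would record the elementary computation that the $\mathfrak v$-component of $n_1^{-1}n$ equals $X-X_1$; since $d$ is given by (\ref{norm}) this yields the pointwise inequality $\|X-X_1\|^2\le d(n_1^{-1}n)$, which is exactly what lets me compare the mixed term $8a\|X-X_1\|^2$ in the denominator of (\ref{explicitqbetamu}) with the quasi-metric term $d(n_1^{-1}n)^2$.

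For part (a), I first observe that the $N$-displacement $p:=(\sqrt{\alpha a|\cos\theta|}\,\omega,\tfrac14\alpha a|\sin\theta|\zeta)$ along the ray satisfies $d(p)=\alpha a$ \emph{independently} of $\omega,\zeta,\theta$ (the computation already used to verify $\gamma_{n_0,\alpha,\omega,\zeta,\theta}(a)\in\partial\Gamma_\alpha(n_0)$), which is what will make the constant uniform in these parameters. I then restrict the integral in (\ref{explicitqbetamu}) to $B(n_0,a)$. For $n_1\in B(n_0,a)$ one has $d(n_1^{-1}n_0)<a$, so writing $n=n_0p$ and applying (\ref{quasinorm}) gives $d(n_1^{-1}n)=d((n_1^{-1}n_0)p)\le\tau_d(1+\alpha)a$, and hence $8a\|X-X_1\|^2\le8\tau_d(1+\alpha)a^2$. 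Thus the denominator is at most $C_1a^2$ with $C_1=16+8\tau_d(1+\alpha)+\tau_d^2(1+\alpha)^2$, and bounding the kernel below by $(C_1a^2)^{-(\rho+\beta)}$ on $B(n_0,a)$ yields
\begin{equation*}
\mathcal{Q}_{i\beta}[|\mu|](\gamma_{n_0,\alpha,\omega,\zeta,\theta}(a))\ge c_\beta C_1^{-(\rho+\beta)}\,a^{2\beta-2(\rho+\beta)}\,|\mu|(B(n_0,a)).
\end{equation*}
Since $2\rho=Q$, the exponent of $a$ is $-Q$, and $a^{-Q}|\mu|(B(n_0,a))=c_0\,|\mu|(B(n_0,a))/m(B(n_0,a))$, giving (a) with $C_{\alpha,\beta}=c_\beta c_0C_1^{-(\rho+\beta)}$, independent of $n_0,\omega,\zeta,\theta$.

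For part (b), since the kernel is positive it suffices to bound $\mathcal{Q}_{i\beta}[|\mu|](n,a)$ for $(n,a)\in\Gamma_\eta(n_0)$. I would decompose $N$ into $A_0=B(n_0,a)$ and the dyadic annuli $A_j=B(n_0,2^ja)\setminus B(n_0,2^{j-1}a)$, $j\ge1$, and estimate the integral over each $A_j$, using $|\mu|(A_j)\le c_0(2^ja)^Q M_{HL}(\mu)(n_0)$. The denominator is handled in two regimes. For $n_1\in A_j$ with $j\ge1$, (\ref{quasinorm}) together with $d(n^{-1}n_0)=d(n_0^{-1}n)<\eta a$ gives $2^{j-1}a\le d(n_1^{-1}n_0)\le\tau_d(d(n_1^{-1}n)+\eta a)$, so that $d(n_1^{-1}n)\ge a(2^{j-1}\tau_d^{-1}-\eta)$; choosing $j_0$ with $2^{j_0-1}\ge2\tau_d\eta$ yields $d(n_1^{-1}n)\ge2^{j-2}a\tau_d^{-1}$ for all $j\ge j_0$, whence the denominator is at least $(2^{j-2}a\tau_d^{-1})^2$. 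For the finitely many indices $j<j_0$ I simply bound the denominator below by $16a^2$.

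Assembling these estimates, the contribution of the annuli $j\ge j_0$ is dominated by a constant times $M_{HL}(\mu)(n_0)\,a^{-2\beta}\sum_{j\ge j_0}2^{-2\beta j}$, and that of the indices $j<j_0$ by a constant times $M_{HL}(\mu)(n_0)\,a^{-2\beta}\sum_{j<j_0}2^{jQ}$; in both cases the prefactor $c_\beta a^{2\beta}$ from (\ref{explicitqbetamu}) cancels $a^{-2\beta}$, so the final bound is scale-invariant and uniform in $n_0$. The tail series converges precisely because $\beta>0$, and the remaining sum is finite, giving (b) with $C'_{\eta,\beta}$ depending only on $\eta$ (through $j_0$ and $\tau_d$) and $\beta$. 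The main obstacle is exactly this two-regime lower bound in (b): because $d$ is only a quasi-metric (constant $\tau_d\ge1$), the inequality (\ref{quasinorm}) controls $d(n_1^{-1}n)$ from below only once $2^j$ dominates $\tau_d\eta$, forcing separate treatment of the small annuli via the $16a^2$ term, and one must then verify that the finitely many such terms together with the convergent tail still produce a single scale-invariant constant.
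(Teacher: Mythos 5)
Your proposal is correct. For part (a) you follow essentially the same route as the paper: restrict the integral in (\ref{explicitqbetamu}) to $B(n_0,a)$, use the quasi-triangle inequality to bound $d(n_1^{-1}n)$ by a constant multiple of $a$, and exploit the structure of the norm (\ref{norm}) --- namely $\|X-X_1\|^2 \le d(n_1^{-1}n)$, because the $\mathfrak{v}$-component of $n_1^{-1}n$ is $X-X_1$ --- to control the mixed term $8a\|X-X_1\|^2$; the only differences are cosmetic: you treat general $n_0$ in one stroke by writing $n=n_0p$ with $d(n_1^{-1}n_0)<a$, whereas the paper first does $n_0=\underline{0}$ and then translates, and your value $d(p)=\alpha a$ is in fact the correct one (the paper's equation asserting $\sqrt{2}\,\alpha a$ at that point is a harmless slip which only inflates the constant, since it is only used as an upper bound). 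The genuine divergence is in part (b): the paper disposes of it in one line by citing Lemma 3.3 of \cite{Ray} together with the trivial inequality $\left|\mathcal{Q}_{i\beta}[\mu]\right|\le \mathcal{Q}_{i\beta}[|\mu|]$, whereas you give a self-contained proof by dyadic decomposition into the annuli $B(n_0,2^ja)\setminus B(n_0,2^{j-1}a)$, bounding $|\mu|$ of each annulus by $c_0(2^ja)^Q M_{HL}(\mu)(n_0)$ and bounding the kernel below in two regimes: the $16a^2$ term for the finitely many small annuli where the quasi-metric constant $\tau_d$ and the aperture $\eta$ interfere, and the $d(n_1^{-1}n)^2$ term for $j\ge j_0$ with $2^{j_0-1}\ge 2\tau_d\eta$. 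Your bookkeeping is sound: the geometric tail $\sum_{j\ge j_0}2^{-2\beta j}$ converges precisely because $\beta>0$, the prefactor $a^{2\beta}$ cancels the $a^{-2\beta}$ from the kernel estimates, and the resulting constant depends only on $\eta$, $\beta$ and structural constants, uniformly in $n_0$ and the scale $a$. Your argument is presumably what the proof of the cited lemma in \cite{Ray} looks like; it buys self-containedness at the cost of length, while the paper's citation keeps the lemma short.
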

\begin{proof}
$(b)$ follows from \cite[Lemma 3.3]{Ray} and the trivial fact $\left|\mathcal{Q}_{i \beta}[\mu]\right| \le \mathcal{Q}_{i \beta}[|\mu|]$.
So we will just prove $(a)$. First it will be done for $n_0=\underline{0}$. 

Choose and fix $\alpha >0, \omega \in S^{2p-1}, \zeta \in S^{k-1}, \theta \in [0,2\pi)$, and consider,
\begin{eqnarray}\label{HL_proof_eqn1}
\mathcal{Q}_{i \beta}[|\mu|](\gamma_{\underline{0},\alpha,\omega,\zeta,\theta}(a)) &=& \mathcal{Q}_{i \beta}[|\mu|](\sqrt{\alpha a | \cos \theta |} \omega, \frac{1}{4} \alpha a |\sin \theta| \zeta,a) \nonumber\\
& =& c_{\beta} a^{-Q} \bigintssss_{N} \frac{1}{I_1} \: d|\mu|(X,Z) \nonumber\\
& \ge & c_{\beta} a^{-Q} \bigintssss_{B(\underline{0},a)} \frac{1}{I_1} d|\mu|(X,Z)  \:,
\end{eqnarray}
where
\begin{equation*}
I_1= \left(16+ \frac{8}{a}\|\sqrt{\alpha a | \cos \theta |} \omega - X\|^2 + \frac{1}{a^2} d\left((X,Z)^{-1} \left(\sqrt{\alpha a | \cos \theta |} \omega, \frac{1}{4} \alpha a |\sin \theta| \zeta\right)\right)^2\right)^{\beta + \rho} \:.
\end{equation*}
Now,
\begin{eqnarray}
\label{HL_proof_eqn2}
&& d\left((X,Z)^{-1} \left(\sqrt{\alpha a | \cos \theta |} \omega, \frac{1}{4} \alpha a |\sin \theta| \zeta\right)\right)^2 \nonumber \\
& \le & \left[\tau_d \left\{d\left((X,Z)^{-1}\right) + d\left(\sqrt{\alpha a |\cos \theta| } \omega, \frac{1}{4} \alpha a |\sin \theta| \zeta\right)\right\}\right]^2 \nonumber \\
& = & \left[\tau_d \left\{d(X,Z) + d\left(\sqrt{\alpha a |\cos \theta| } \omega, \frac{1}{4} \alpha a |\sin \theta| \zeta\right)\right\}\right]^2 \:,
\end{eqnarray}
and by (\ref{norm})
\begin{equation}
\label{HL_proof_eqn3}
d\left(\sqrt{\alpha a |\cos \theta| } \omega, \frac{1}{4} \alpha a |\sin \theta| \zeta\right) = \sqrt{2} \alpha a \:.
\end{equation}
Now plugging (\ref{HL_proof_eqn3}) in (\ref{HL_proof_eqn2}) and using $d(X,Z) <a$, we obtain,
\begin{equation}
\label{HL_proof_eqn4}
d\left((X,Z)^{-1} \left(\sqrt{\alpha a | \cos \theta |} \omega, \frac{1}{4} \alpha a |\sin \theta| \zeta\right)\right)^2 \le  \left[\tau_d (a + \sqrt{2} \alpha a) \right]^2 = C^2_{\alpha} \tau^2_d a^2 \:,
\end{equation}
where $C_{\alpha}=(1+\sqrt{2}\alpha)$. Then (\ref{HL_proof_eqn4}) yields,
\begin{equation*}
\|\sqrt{\alpha a |\cos \theta| } \omega - X \|^4 \le d\left((X,Z)^{-1} \left(\sqrt{\alpha a | \cos \theta |} \omega, \frac{1}{4} \alpha a |\sin \theta| \zeta\right)\right)^2 \le C^2_{\alpha} \tau^2_d a^2
\end{equation*}
that is,
\begin{equation} \label{HL_proof_eqn5}
\|\sqrt{\alpha a |\cos \theta| } \omega - X \|^2 \le C_{\alpha} \tau_d a \:.
\end{equation}
Thus plugging (\ref{HL_proof_eqn4}) and (\ref{HL_proof_eqn5})  in (\ref{HL_proof_eqn1}), we get,
\begin{eqnarray*}
&& \mathcal{Q}_{i \beta}[|\mu|](\gamma_{\underline{0},\alpha,\omega,\zeta,\theta}(a)) \\
& \ge & c_{\beta} a^{-Q} \bigintssss_{B(\underline{0},a)} \frac{1}{\left(16 + 8 C_{\alpha} \tau_d + C^2_{\alpha} \tau^2_d \right)^{\rho + \beta}} \\
& = & \frac{c_{\beta} m(B(\underline{0},1))}{\left(16 + 8 C_{\alpha} \tau_d + C^2_{\alpha} \tau^2_d \right)^{\rho + \beta}} \frac{|\mu |(B(\underline{0},a))}{m(B(\underline{0},a))} \:.
\end{eqnarray*}
This completes the proof when $n_0 = \underline{0}$. Now for a general $n_0 \in N$, we consider for $a>0$,
\begin{equation*}
\mathcal{Q}_{i \beta}[|\mu|](\gamma_{n_0,\alpha,\omega,\zeta,\theta}(a)) \:.
\end{equation*}
For simplicity, in the rest of the proof we write an element $s \in S$ as
\begin{equation*}
s=(N(s),a) \: \text{ with } N(s) =(X(s),Z(s)) \:.
\end{equation*}
Then we get,
\begin{equation}\label{HL_proof_eqn6}
\mathcal{Q}_{i \beta}[|\mu|](\gamma_{n_0,\alpha,\omega,\zeta,\theta}(a)) = \mathcal{Q}_{i \beta}[|\mu|](n_0(N(\gamma_{\underline{0},\alpha,\omega,\zeta,\theta}(a))),a) \ge  c_{\beta} a^{-Q} \bigintssss_{B(n_0,a)} \frac{1}{I_2} d|\mu|(X_1,Z_1) \:,
\end{equation}
where
\begin{equation*}
I_2=\left(16+ \frac{8}{a} \|X(n_0(N(\gamma_{\underline{0},\alpha,\omega,\zeta,\theta}(a))))-X_1\|^2 + \frac{1}{a^2} d\left((X_1,Z_1)^{-1} n_0(N(\gamma_{\underline{0},\alpha,\omega,\zeta,\theta}(a)))\right)^2  \right)^{\rho + \beta} \:.
\end{equation*}
Using the fact $(X_1,Z_1) \in B(n_0,a)$ means $n^{-1}_0 (X_1,Z_1)  \in B(\underline{0},a)$ and also using the relations ${(X_1,Z_1)}^{-1}n_0={\left(n^{-1}_0 (X_1,Z_1)\right)}^{-1}$\:,
\begin{equation*}
d\left((X_1,Z_1)^{-1} n_0(N(\gamma_{\underline{0},\alpha,\omega,\zeta,\theta}(a)))\right) = d\left((n^{-1}_0(X_1,Z_1))^{-1} N(\gamma_{\underline{0},\alpha,\omega,\zeta,\theta}(a))\right) \:,
\end{equation*}
from (\ref{HL_proof_eqn4}), we get,
\begin{equation} \label{HL_proof_eqn7}
d\left((X_1,Z_1)^{-1} n_0(N(\gamma_{\underline{0},\alpha,\omega,\zeta,\theta}(a)))\right)^2 \le C^2_{\alpha} \tau^2_d a^2 \:,
\end{equation}
which then in turn implies that
\begin{equation} \label{HL_proof_eqn8}
\|X(n_0(N(\gamma_{\underline{0},\alpha,\omega,\zeta,\theta}(a))))-X_1\|^2 \le C_{\alpha} \tau_d a \:.
\end{equation}
Hence plugging (\ref{HL_proof_eqn7}) and (\ref{HL_proof_eqn8}) in (\ref{HL_proof_eqn6}), and noting that
\begin{equation*}
a^{-Q}= \frac{m(B(\underline{0},1))}{m(B(n_0,a))} \:,
\end{equation*}
we get,
\begin{equation*}
\mathcal{Q}_{i \beta}[|\mu|](\gamma_{n_0,\alpha,\omega,\zeta,\theta}(a)) \ge \frac{c_{\beta} m(B(\underline{0},1))}{\left(16 + 8 C_{\alpha} \tau_d + C^2_{\alpha} \tau^2_d \right)^{\rho + \beta}} \frac{|\mu |(B(n_0,a))}{m(B( n_0,a))} \:.
\end{equation*}
This completes the proof.
\end{proof}
The proof of Lemma 2.5 of \cite{Ray}, yields the following stronger result.
\begin{lemma}
\label{integral_finite}
Let $\beta>0$ and $\mu$ be a radon measure on $N$ such that $|\mu| \ast q^{i\beta}_a(\underline{0})$ is finite for some $a\in A$. Then
\begin{equation*}
\bigintsss_{N} \left(16a^2 + \frac{d(n)^2}{4 \tau^2_d}\right)^{- \beta - \rho} d|\mu|(n) < \infty \:.
\end{equation*}
\end{lemma}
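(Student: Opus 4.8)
The plan is to reduce the statement to an elementary pointwise comparison of two denominators, after first unwinding the hypothesis into an explicit integral. First I would note that by definition $|\mu| \ast q^{i\beta}_a(\underline{0}) = \mathcal{Q}_{i\beta}[|\mu|](\underline{0},a)$ and specialize the explicit formula (\ref{explicitqbetamu}) to the point $(n,a)=(\underline{0},a)$. Writing a generic $n=(X,Z)\in N$ and using property (iii) of the homogeneous norm, $d\!\left((X,Z)^{-1}(\underline{0})\right)=d(n^{-1})=d(n)$, the finiteness hypothesis becomes, since $c_\beta a^{2\beta}>0$ is a fixed positive constant, exactly the statement that
\begin{equation*}
\int_N \left(16a^2+8a\|X\|^2+d(n)^2\right)^{-\beta-\rho}\,d|\mu|(n)<\infty.
\end{equation*}

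Next I would establish the pointwise inequality relating the two denominators: for every $n=(X,Z)\in N$,
\begin{equation*}
16a^2+8a\|X\|^2+d(n)^2 \le 8\tau_d^2\left(16a^2+\frac{d(n)^2}{4\tau_d^2}\right).
\end{equation*}
There are two elementary ingredients. First, by (\ref{norm}) one has $\|X\|^4\le d(n)^2$, hence $\|X\|^2\le d(n)$, and therefore $8a\|X\|^2\le 8a\,d(n)=2(4a)(d(n))\le 16a^2+d(n)^2$; this gives $16a^2+8a\|X\|^2+d(n)^2\le 2\!\left(16a^2+d(n)^2\right)$. Second, since $\tau_d\ge 1$ we have $\tfrac{1}{4\tau_d^2}\le 1$, so $16a^2+\tfrac{d(n)^2}{4\tau_d^2}\ge \tfrac{1}{4\tau_d^2}\!\left(16a^2+d(n)^2\right)$. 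Combining these two bounds yields the displayed inequality.

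Finally, since $\beta+\rho>0$ the map $t\mapsto t^{-\beta-\rho}$ is decreasing on $(0,\infty)$, so the pointwise inequality gives
\begin{equation*}
\left(16a^2+\frac{d(n)^2}{4\tau_d^2}\right)^{-\beta-\rho}\le (8\tau_d^2)^{\beta+\rho}\left(16a^2+8a\|X\|^2+d(n)^2\right)^{-\beta-\rho}.
\end{equation*}
Integrating this against $d|\mu|$ and invoking the finiteness already extracted from the hypothesis closes the argument. I do not expect a serious obstacle: the whole content is the pointwise comparison, and the only points needing a little care are the correct specialization of (\ref{explicitqbetamu}) at $n=\underline{0}$ (in particular the reduction $d(n^{-1})=d(n)$) and the bookkeeping of the constant $\tau_d$, which enters solely through the trivial bound $\tau_d\ge 1$. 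The strengthening over the corresponding step in \cite{Ray} is precisely the explicit retention of the factor $1/(4\tau_d^2)$ in the denominator, which costs nothing beyond tracking constants.
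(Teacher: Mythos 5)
Your proof is correct. Each step checks out: the specialization of (\ref{explicitqbetamu}) at $(\underline{0},a)$ using $d(n^{-1})=d(n)$ is exact (no quasi-triangle inequality is needed at the origin), the bound $\|X\|^2\le d(n)$ follows from (\ref{norm}), the AM--GM absorption $8a\|X\|^2\le 16a^2+d(n)^2$ is valid, and the factor $4\tau_d^2\ge 1$ handles the constant in the denominator, so the two kernels are comparable and finiteness transfers. Note that the paper itself gives no argument for this lemma: it simply asserts that the proof of Lemma 2.5 of \cite{Ray} yields this (stronger) statement. Your write-up is therefore a self-contained replacement for that citation, and the estimates you use are precisely the ones such a proof must rest on --- the comparison of $16a^2+8a\|X\|^2+d(n)^2$ with $16a^2+d(n)^2$ via the structure of the homogeneous norm, plus trivial tracking of $\tau_d$. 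If anything, your version makes explicit why the hypothesis at the single point $\underline{0}$ suffices and where the harmless constant $(8\tau_d^2)^{\beta+\rho}$ comes from, which the paper leaves implicit.
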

Next we see an asymptotic result.
\begin{lemma}
\label{asympt}
For $\beta >0, \mu \in \M_{\beta}$ and  $ \delta \in (0,1)$, we have,
\begin{equation*}
\displaystyle\lim_{a \to 0} \bigintssss_{{B(\underline{0}, \delta)}^c} q^{i \beta}_a (n^{-1}_1 n) d\mu(n_1)=0 \:,
\end{equation*}
uniformly for $n \in B \left(\underline{0}, \frac{\delta^2}{2 \tau_d}\right)$.
\end{lemma}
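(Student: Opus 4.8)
The plan is to reduce the statement to a single $n$-independent upper bound on the integrand and then invoke Lemma \ref{integral_finite} together with dominated convergence, the decay coming from the prefactor $a^{2\beta}$. First I would record the explicit form of the kernel for $\lambda=i\beta$, namely $q^{i\beta}_a(X,Z)=c_\beta\, a^{2\beta}\left(16a^2+8a\|X\|^2+d(X,Z)^2\right)^{-(\rho+\beta)}$, so that after discarding the nonnegative middle term one has $\left|q^{i\beta}_a(n_1^{-1}n)\right|\le |c_\beta|\, a^{2\beta}\left(16a^2+d(n_1^{-1}n)^2\right)^{-(\rho+\beta)}$.

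The heart of the argument is a geometric separation estimate showing that $n_1^{-1}n$ stays uniformly away from $\underline 0$. For $n\in B\!\left(\underline 0,\tfrac{\delta^2}{2\tau_d}\right)$ and $n_1\notin B(\underline 0,\delta)$, the quasi-triangle inequality (\ref{quasitriangle}) applied to the three points $\underline 0,\,n,\,n_1$ gives $d(n_1)\le \tau_d\!\left[d(n)+d(n_1^{-1}n)\right]$, hence $d(n_1^{-1}n)\ge \tfrac{d(n_1)}{\tau_d}-d(n)$. Since $\delta\in(0,1)$ forces $\delta^2\le\delta\le d(n_1)$, I get $d(n)<\tfrac{\delta^2}{2\tau_d}\le \tfrac{d(n_1)}{2\tau_d}$, and therefore $d(n_1^{-1}n)\ge \tfrac{d(n_1)}{2\tau_d}$, that is $d(n_1^{-1}n)^2\ge \tfrac{d(n_1)^2}{4\tau_d^2}$. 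Substituting this into the bound above yields the $n$-free estimate $\left|q^{i\beta}_a(n_1^{-1}n)\right|\le |c_\beta|\, a^{2\beta}\left(16a^2+\tfrac{d(n_1)^2}{4\tau_d^2}\right)^{-(\rho+\beta)}$, valid for every $n$ in the small ball, with the right-hand side precisely the quantity appearing in Lemma \ref{integral_finite}.

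Finally I would integrate this bound over $B(\underline 0,\delta)^c$ against $d|\mu|$, so that it suffices to show the resulting $n$-independent integral tends to $0$ as $a\to0$. Fixing any $a_0>0$ and restricting to $0<a\le a_0$, I use that $d(n_1)\ge\delta$ on $B(\underline 0,\delta)^c$ to compare $16a^2+\tfrac{d(n_1)^2}{4\tau_d^2}$ from below with $16a_0^2+\tfrac{d(n_1)^2}{4\tau_d^2}$ up to a multiplicative constant depending only on $\delta,a_0,\tau_d$. Because $\mu\in\M_\beta$ makes $|\mu|\ast q^{i\beta}_{a_0}(\underline 0)$ finite, Lemma \ref{integral_finite} shows $\left(16a_0^2+\tfrac{d(n_1)^2}{4\tau_d^2}\right)^{-(\rho+\beta)}$ is $|\mu|$-integrable, providing an $a$-independent dominating function. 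As $a\to0$ the integrand tends pointwise to $0$, since $a^{2\beta}\to0$ while the remaining factor stays bounded, so dominated convergence gives the limit; uniformity in $n$ is automatic, as the dominating integral never depended on $n$.

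I expect the only genuinely delicate point to be this geometric separation estimate: obtaining the clean constant $\tfrac{d(n_1)^2}{4\tau_d^2}$ hinges on simultaneously exploiting $\delta<1$ (to replace $\delta^2$ by the larger $\delta\le d(n_1)$) and the exact radius $\tfrac{\delta^2}{2\tau_d}$ of the ball containing $n$, so that the subtraction $\tfrac{d(n_1)}{\tau_d}-d(n)$ retains a definite positive fraction of $\tfrac{d(n_1)}{\tau_d}$ uniformly in both $n$ and $n_1$.
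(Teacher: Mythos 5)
Your proof is correct, and its first two steps coincide with the paper's: the paper also discards the middle term $8a\|X-X_1\|^2$ of the kernel and proves the same separation estimate $d(n_1^{-1}n)^2 \ge \frac{d(n_1)^2}{4\tau_d^2}$ (the only cosmetic difference being that the paper bounds $d(n) \le \frac{\delta}{2\tau_d}d(n_1)$ and then uses $\left(1-\delta/2\right)^2 > 1/4$, whereas you use $\delta^2 \le d(n_1)$ to get $d(n) < \frac{d(n_1)}{2\tau_d}$ directly). Where you genuinely differ is the concluding limit, and your version is the more careful one. The paper enlarges the domain of integration from $B(\underline{0},\delta)^c$ to all of $N$, cites Lemma \ref{integral_finite} for finiteness of the resulting integral, and concludes by ``letting $a\to 0$''; as written this does not suffice, since that integral depends on $a$ and can grow as $a\to 0$: for $\mu$ equal to the Dirac mass at $\underline{0}$ (which belongs to $\M_{\beta}$) the enlarged bound equals $c_\beta\, a^{2\beta}(16a^2)^{-\rho-\beta} = c_\beta\, 16^{-\rho-\beta} a^{-2\rho} \to \infty$. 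Your argument supplies exactly the missing uniformity: by keeping the integral over $B(\underline{0},\delta)^c$, where $d(n_1)\ge\delta$, one has for $0<a\le a_0$ the comparison $16a_0^2+\frac{d(n_1)^2}{4\tau_d^2} \le \left(1+\frac{64 a_0^2\tau_d^2}{\delta^2}\right)\left(16a^2+\frac{d(n_1)^2}{4\tau_d^2}\right)$, so Lemma \ref{integral_finite} at the single fixed scale $a_0$ gives an $a$-independent and $n$-independent bound on the integral, and the prefactor $a^{2\beta}\to 0$ finishes the proof (dominated convergence is not even needed at that point). So your route buys a fully rigorous last step where the paper's wording glosses over the $a$-dependence of its dominating integral.
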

\begin{proof}
We note that for $n \in B \left(\underline{0}, \frac{\delta^2}{2 \tau_d}\right)$ and $n_1 \in {B(\underline{0}, \delta)}^c$,
\begin{equation*}
d(n) < \frac{\delta^2}{2 \tau_d} \le  \frac{\delta}{2 \tau_d} d(n_1) \:.
\end{equation*}
Hence,
\begin{equation}
\label{asympt_eqn1}
d(n^{-1}_1n)= d(n^{-1}n_1) \ge \frac{1}{\tau_d} d(n_1) - d(n) \ge \frac{\left(1-\delta/2\right)}{\tau_d} d(n_1)
\end{equation}
Now as $0 < \delta <1$, it follows that,
\begin{equation*}
\frac{1}{4} < \left(1- \frac{\delta}{2}\right)^2 < 1 \:.
\end{equation*}
Now plugging the above in (\ref{asympt_eqn1}), we get,
\begin{equation}
\label{asympt_eqn2}
d(n^{-1}_1n)^2 \ge  \frac{d(n_1)^2}{4 \tau^2_d} \:.
\end{equation}
Now using (\ref{asympt_eqn2}), we get,
\begin{eqnarray*}
&& \left| \bigintssss_{{B(\underline{0}, \delta)}^c} q^{i \beta}_a (n^{-1}_1 n) d\mu(n_1) \right| \\
& \le & \bigintssss_{{B(\underline{0}, \delta)}^c} q^{i \beta}_a (n^{-1}_1 n) d|\mu|(n_1) \\
& = & c_{\beta} a^{2 \beta} \bigintssss_{{B(\underline{0}, \delta)}^c} \frac{1}{\left(16a^2 + 8a \|X-X_1\|^2 +d\left(\left(X_1,Z_1\right)^{-1} \left(X,Z\right)\right)^2\right)^{\rho + \beta}}  d|\mu|(X_1,Z_1)\\
& \le & c_{\beta} a^{2 \beta} \bigintssss_{{B(\underline{0}, \delta)}^c} \frac{1}{\left(16a^2  +d\left(n^{-1}_1n\right)^2 \right)^{\rho + \beta}} d |\mu|(n_1) 
\end{eqnarray*}
\begin{eqnarray*}
& \le & c_{\beta} a^{2 \beta} \bigintssss_{{B(\underline{0}, \delta)}^c} \frac{1}{\left(16a^2  + \frac{d(n_1)^2}{4 \tau^2_d}\right)^{\rho + \beta}} d |\mu|(n_1) \\
& \le & c_{\beta} a^{2 \beta} \bigintssss_{N} \frac{1}{\left(16a^2  + \frac{d(n_1)^2}{4 \tau^2_d}\right)^{\rho + \beta}} d |\mu|(n_1) \:.\\
\end{eqnarray*}
Now as $\mu \in \M_{\beta}$, by Lemma \ref{integral_finite}, the integral on the right hand side above is finite, and thus letting $a \to 0$, we get the result.
\end{proof}
We now need a result about zero sets of real-analytic functions, [Theorem 1.2, \cite{Peralta}]. Although the result was proved for harmonic functions, the proof works for real-analytic functions as well.
\begin{lemma}
\label{peralta}
Let $u$ be a real-analytic function in a domain $D \subset \mathbb R^k$. If $M$ is a real-analytic submanifold of $D$ and the zero set $\mathcal Z$ of $u$ contains a non-empty, relatively open subset of $M$, then $M \subset \mathcal Z \:.$
\end{lemma}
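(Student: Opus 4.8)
The plan is to reduce the statement to the classical identity (unique continuation) theorem for real-analytic functions, via the observation that the restriction of $u$ to $M$ is itself a real-analytic function on $M$. I will assume, as is implicit in the intended application, that $M$ is connected; the general case follows by applying the argument to the connected component of $M$ that meets the given relatively open set.

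First I would note that since $u$ is real-analytic on $D$ and $M$ is a real-analytic submanifold of $D$, the restriction $v := u|_M$ is a real-analytic function on the manifold $M$. Concretely, around any point of $M$ there is a real-analytic local parametrization $\phi \colon \Omega \to M$, with $\Omega \subseteq \mathbb{R}^{\dim M}$ open, and $v \circ \phi = u \circ \phi$ is a composition of real-analytic maps, hence real-analytic on $\Omega$. The hypothesis then says precisely that $v$ vanishes on a non-empty relatively open subset $V_0$ of $M$, and the goal is to conclude that $v \equiv 0$ on $M$, i.e.\ $M \subseteq \mathcal{Z}$.

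Next I would run the standard connectedness (clopen) argument. Let
\[
U = \{x \in M : v \text{ vanishes identically on some relative neighbourhood of } x \text{ in } M\}.
\]
By construction $U$ is open in $M$, and $U \supseteq V_0 \neq \emptyset$. The crux is to show that $U$ is also closed in $M$. If $x$ lies in the closure of $U$, choose a real-analytic chart $\phi$ at $x$; then $v \circ \phi$ is real-analytic near $\phi^{-1}(x)$ and vanishes on the non-empty open set $\phi^{-1}(U)$ accumulating at $\phi^{-1}(x)$, so by continuity all of its partial derivatives vanish at $\phi^{-1}(x)$. Since $v \circ \phi$ equals its convergent Taylor series in a neighbourhood of $\phi^{-1}(x)$, it must vanish identically there, and hence $x \in U$. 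Thus $U$ is open, closed and non-empty in the connected set $M$, whence $U = M$ and $v \equiv 0$, giving $M \subseteq \mathcal{Z}$.

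The only genuinely substantive point — the main obstacle — is the closedness of $U$, which rests on the local power-series representation of a real-analytic function: the vanishing of a real-analytic function together with all of its derivatives at a point forces it to vanish on a full neighbourhood. This is exactly the content of the identity theorem for real-analytic functions of several variables, and it is the step where, as remarked after the statement, the argument goes through for real-analytic functions in place of the harmonic functions used in the cited reference.
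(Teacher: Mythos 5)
Your proof is correct, and it is essentially the argument the paper relies on: the paper does not prove this lemma itself, but cites it as Theorem 1.2 of Enciso and Peralta-Salas with the remark that the proof given there for harmonic functions works for real-analytic functions, and that proof is precisely your restriction-to-analytic-charts plus identity-theorem (clopen) argument. One caveat worth recording: connectedness of $M$ is genuinely needed, since for disconnected $M$ the statement as written is false --- take $u(x,y)=y$ on $D=\mathbb R^2$ and $M=\{y=0\}\cup\{y=1\}$, so that $\mathcal Z$ contains the relatively open piece $\{y=0\}$ of $M$ but $M\not\subset\mathcal Z$. Consequently your closing claim that ``the general case follows'' by passing to a component is not quite right: that argument only shows the component meeting the given open set lies in $\mathcal Z$. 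This is an imprecision in the lemma's statement rather than a gap in your proof, and it is harmless for the paper's application, where $M=\partial\Gamma_{\kappa}(\underline{0})$ (a continuous image of $S^{2p-1}\times S^{k-1}\times[0,2\pi)\times(0,\infty)$) is connected.
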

Now as $\mathcal{L}^{\beta}$ is an elliptic operator with no zero order term, we get that $\mathcal{L}^{\beta}$ has a global expression of the form (with $N$ viewed as $\R^{2p+k}$),
\begin{equation*}
\mathcal{L}^{\beta} = \displaystyle\sum_{i,j=0}^{2p+k} c_{ij}(n,a) \frac{\partial^2}{\partial y_i \partial y_j} + \displaystyle\sum_{i=0}^{2p+k} b_i(n,a) \frac{\partial}{\partial y_i} \:\:,
\end{equation*}
on $S$, in some coordinates $\{y_i\}$, with the above coefficients being sufficiently regular and the matrices $\{c_{ij}(n,a)\}_{1 \le i,j \le 2p+k}$ positive definite, symmetric. Then proceeding as in the proof of Theorem 1 in [\cite{Evans},P. 344-346], we get the following maximum principle.
\begin{lemma}
\label{max1}
Let $\Omega$ be an open, connected, bounded subset of $S$. Assume $u \in C^2(\Omega) \cap C(\overline{\Omega})$ (real-valued) and $\mathcal{L}^\beta u \ge 0$ in $\Omega$, then
\begin{equation*}
\displaystyle\max_{\overline{\Omega}} u=\displaystyle\max_{\partial \Omega} u \:.
\end{equation*}
 \end{lemma}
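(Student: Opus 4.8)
The plan is to follow the classical two-step proof of the weak maximum principle for a second-order elliptic operator with no zeroth-order term, exactly as for Theorem 1 in \cite{Evans}. Throughout I work in the global coordinates $\{y_i\}_{i=0}^{2p+k}$ in which $\mathcal{L}^{\beta}$ is written, and I exploit that $\overline{\Omega}$ is compact: since the Harmonic $NA$ group $S$ is a complete (Hadamard) manifold, the bounded set $\Omega$ has compact closure, so the continuous coefficients $c_{ij},b_i$ are bounded on $\overline{\Omega}$ and ellipticity gives $c_{11}\ge\theta>0$ there for some constant $\theta$.

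First I would treat the strict case $\mathcal{L}^{\beta}u>0$ on $\Omega$. The continuous function $u$ attains its maximum over the compact set $\overline{\Omega}$, and I claim this maximum cannot occur at an interior point. Indeed, if $u$ had a local maximum at $x_0\in\Omega$, the first derivative test gives $\partial u/\partial y_i(x_0)=0$ for all $i$, so the first-order term of $\mathcal{L}^{\beta}u$ vanishes at $x_0$, while the second derivative test gives that the Hessian $M=(\partial^2 u/\partial y_i\partial y_j)(x_0)$ is negative semidefinite. The key linear-algebra fact is that $\sum_{i,j}c_{ij}(x_0)M_{ij}\le 0$: diagonalizing the symmetric positive-definite matrix $C=(c_{ij}(x_0))$ as $C=ODO^{T}$ with $D=\mathrm{diag}(d_1,\dots)$ and $d_k>0$, one has $\sum_{i,j}c_{ij}M_{ij}=\mathrm{tr}(CM)=\sum_k d_k\,(O^{T}MO)_{kk}\le 0$, because $O^{T}MO$ is again negative semidefinite. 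Hence $\mathcal{L}^{\beta}u(x_0)\le 0$, contradicting strict positivity. Therefore the maximum lies on $\partial\Omega$, i.e. $\max_{\overline{\Omega}}u=\max_{\partial\Omega}u$.

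Next I reduce the general hypothesis $\mathcal{L}^{\beta}u\ge 0$ to the strict case by the standard exponential perturbation. Since $\mathcal{L}^{\beta}$ has no zeroth-order term, for the single coordinate function $y_1$ I compute $\mathcal{L}^{\beta}(e^{\lambda y_1})=\lambda e^{\lambda y_1}(c_{11}\lambda+b_1)$. Choosing $\lambda>0$ larger than $(\sup_{\overline{\Omega}}|b_1|)/\theta$ makes $c_{11}\lambda+b_1>0$ on $\overline{\Omega}$, so $\mathcal{L}^{\beta}(e^{\lambda y_1})>0$ there. For $\epsilon>0$ set $u^{\epsilon}=u+\epsilon e^{\lambda y_1}$; then $\mathcal{L}^{\beta}u^{\epsilon}\ge\epsilon\,\mathcal{L}^{\beta}(e^{\lambda y_1})>0$, so the strict case gives $\max_{\overline{\Omega}}u^{\epsilon}=\max_{\partial\Omega}u^{\epsilon}$. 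Using $\partial\Omega\subset\overline{\Omega}$ and the boundedness of $e^{\lambda y_1}$ on the compact set $\overline{\Omega}$, this yields $\max_{\overline{\Omega}}u\le\max_{\partial\Omega}u+\epsilon\sup_{\overline{\Omega}}e^{\lambda y_1}$; letting $\epsilon\to 0$, and noting the reverse inequality is trivial, gives the claim.

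The only genuinely nontrivial point is the interior-maximum step, namely verifying $\mathrm{tr}(CM)\le 0$ from positive-definiteness of the principal symbol together with negative-semidefiniteness of the Hessian; everything else is a routine adaptation of the argument in \cite{Evans}, the one structural input being that completeness of $S$ makes $\overline{\Omega}$ compact, so that the coefficients are uniformly controlled and $\lambda$ can be chosen once and for all. I note that connectedness of $\Omega$ is not in fact needed for this weak form of the principle.
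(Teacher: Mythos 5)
Your proposal is correct and is essentially identical to the paper's proof: the paper simply invokes the argument of Theorem 1 in \cite{Evans} (pp.\ 344--346), which is exactly the two-step scheme you carry out (interior-maximum contradiction for $\mathcal{L}^{\beta}u>0$ via $\mathrm{tr}(CM)\le 0$, then the perturbation $u+\epsilon e^{\lambda y_1}$ using the absence of a zeroth-order term). Your explicit remarks that compactness of $\overline{\Omega}$ gives uniform control of the coefficients, and that connectedness is not actually needed, are sound and merely make precise what the paper leaves implicit.
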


\section{Main results}

\begin{remark} \label{conditions_on_the_boundary_measure}
Now we discuss and compare the conditions on the boundary measure. Our main results in this section will be concerned with measures $\mu \in \mathcal{M}_{\beta}$, such that the concerned convolution satisfy finite radial limit along some ray, that is,
\begin{equation}
\label{radial_limit_finite}
\displaystyle\lim_{a \to 0} \mathcal{Q}_{i \beta}[\mu](\gamma_{n_0,\alpha,\omega,\zeta,\theta}(a)) \:\: \text{is finite } \:,
\end{equation}
for some $n_0 \in N$, $\alpha >0$, $\omega \in S^{2p-1}$, $\zeta \in S^{k-1}$, $\theta \in [0, 2 \pi)$. Now consider the following three additional conditions.
\begin{enumerate}
\item[($\mathscr{H}_1$)] There exists $\varepsilon>0$ such that
\begin{equation*}
\displaystyle\sup_{0<r< \varepsilon} \frac{|\mu|(B(n_0,r))}{m(B(n_0,r))} < \infty \:.
\end{equation*}
\item[($\mathscr{H}_2$)] $\mu$ is positive.
\item[($\mathscr{H}_3$)] In some neighbourhood $V$ of $n_0$ in $S$,
\begin{equation*}
\displaystyle\sup_{(n,a) \in V} \mathcal{Q}_{i \beta}[|\mu|] (n,a) - \left| \mathcal{Q}_{i \beta}[\mu] (n,a) \right| < \infty \:.
\end{equation*}
\end{enumerate}

We note that ($\mathscr{H}_1$) is the hypothesis assumed in Theorem \ref{result_1} and is similar to what Saeki had considered in \cite{Sa}, whereas ($\mathscr{H}_3$) is an obvious analogue of the condition introduced in \cite{Brossard}. Under the assumption (\ref{radial_limit_finite}), we will show that ($\mathscr{H}_1$) is the weakest assumption on the (complex) measure $\mu$. Proof of corollary \ref{result_2}, will show that ($\mathscr{H}_2$) implies ($\mathscr{H}_1$). It is clear from the definitions that ($\mathscr{H}_2$) implies ($\mathscr{H}_3$), as the concerned difference in the definition of ($\mathscr{H}_3$) is identically zero, because in this case $\mu = |\mu|$. It is also not at all hard to see that ($\mathscr{H}_3$) implies ($\mathscr{H}_1$). This is seen as follows, there exists $\delta_1 >0$ such that
\begin{equation*}
\left\{\gamma_{n_0,\alpha,\omega,\zeta,\theta}(a)| 0 < a < \delta_1 \right\} \subset V \:,
\end{equation*}
where $V$ is as given in ($\mathscr{H}_3$). Thus by ($\mathscr{H}_3$),  there exists $M_1>0$ such that
\begin{equation}
\label{remark_eqn1}
\displaystyle\sup_{0 < a < \delta_1} \left\{\mathcal{Q}_{i \beta}[|\mu|] (\gamma_{n_0,\alpha,\omega,\zeta,\theta})(a)) - \left| \mathcal{Q}_{i \beta}[\mu] (\gamma_{n_0,\alpha,\omega,\zeta,\theta})(a)) \right|\right\} \le M_1 \:.
\end{equation}
Then by (\ref{radial_limit_finite}), there exists $\delta_2 >0$ and $M_2 >0$ such that
\begin{equation}
\label{remark_eqn2}
\displaystyle\sup_{0 < a < \delta_2} \left|\mathcal{Q}_{i \beta}[\mu](\gamma_{n_0,\alpha,\omega,\zeta,\theta}(a)) \right|  \le M_2 \:.
\end{equation}
Thus for $\delta=\min\{\delta_1,\delta_2\}$, combining (\ref{remark_eqn1}) and (\ref{remark_eqn2}), we get
\begin{equation}
\label{remark_eqn3}
\displaystyle\sup_{0 < a < \delta} \mathcal{Q}_{i \beta}[|\mu|](\gamma_{n_0,\alpha,\omega,\zeta,\theta}(a)) \le M_1 + M_2 \:.
\end{equation}
Then (\ref{remark_eqn3}) combined with part $(a)$ of Lemma \ref{HL}, completes the proof.
\end{remark}
Now we come to the proof of Theorem \ref{result_1}.
\begin{proof}[Proof of Theorem \ref{result_1}]
We break the proof into two steps. 

\textit{\textbf{Step 1:} Assume $\mu$ to be finite and $n_0=\underline{0}$.} 

We argue by contradiction. Let $\alpha >0$ and suppose there exists a sequence $\{(n_j,a_j)\}_{j=1}^{\infty} \subset \Gamma_{\alpha}(\underline{0})$
such that the sequence $a_j$ converges $0$, (this implies that $(n_j,a_j)$ converges to $(\underline{0},0)$), but $\{u(n_j,a_j)\}_{j=1}^{\infty}$ does not converge to $L$. 

As $\mu$ is finite, we have for all $r \ge t_0$,
\begin{equation}
\label{res1_eqn1}
\frac{|\mu|(B(\underline{0},r))}{m(B(\underline{0},r))} \le \frac{|\mu|(N)}{m(B(\underline{0},t_0))} < \infty \:.
\end{equation}
Then combining (\ref{measure_condition}) and (\ref{res1_eqn1}), we get
\begin{equation*}
%\label{res1_eqn2}
M_{HL}(\mu)(\underline{0}) < \infty \:.
\end{equation*}
Then by part $(b)$ of Lemma \ref{HL}, it follows that $u$ is bounded on each admissible region with vertex at $\underline{0}$. Thus in particular, $\{u(n_j,a_j)\}_{j=1}^{\infty}$ is a bounded sequence of complex numbers. So we consider any convergent subsequence with limit say $L'$, and then by showing that $L'=L$, we get the desired contradiction. For notation convenience, let us call this convergent subsequence $\{u(n_j,a_j)\}_{j=1}^{\infty}$ again. For all $j \in \N$, we define
\begin{equation}
\label{res1_eqn3}
u_j(n,a):= u(\delta_{a_j}(n),a_j a) \:, \text{ for all } (n,a) \in S \:.
\end{equation}
Then by Lemma \ref{dilationof_Lbeta}, we get that each $u_j$ is again an $\mathcal{L}^{\beta}$-harmonic function. Now as each admissible region with vertex at $\underline{0}$, is invariant under the natural action of $A$ (see remark \ref{invarianceofadmissible}), we have for all $\eta >0$,
\begin{equation}
\label{res1_eqn4}
\displaystyle\sup_{j \in \N}\sup_{(n,a) \in \Gamma_{\eta}(\underline{0})} \left|u_j(n,a)\right| \le \displaystyle\sup_{(n,a) \in \Gamma_{\eta}(\underline{0})} \left|u(n,a)\right|  < \infty \:.
\end{equation}
As any compact subset of $S$ is contained in an admissible region [see Theorem 4.1,\cite{Ray}], it follows from (\ref{res1_eqn4}), that $\{u_j\}_{j=1}^\infty$ is a locally bounded sequence of $\mathcal{L}^{\beta}$-harmonic functions. Then by Lemma \ref{our_bar}, $\{u_j\}_{j=1}^\infty$ has a subsequence, say, $\{u_{j_k}\}_{k=1}^\infty$, that converges normally to an $\mathcal{L}^{\beta}$-harmonic function, say $v$. It follows from (\ref{res1_eqn4}) that $v$ is also bounded on each admissible region with vertex at $\underline{0}$. 

Now let $\mathscr{S}$ be the sector on $\partial \Gamma_{\kappa}(\underline{0})$, for some $\kappa >0$, on which the function $u$ satisfies the sectorial limit hypothesis. Given any point $(n,a) \in \mathscr{S}$, there exists a ray $\gamma_{\underline{0},\kappa, \omega, \zeta, \theta}$ \:, for some $\omega \in S^{2p-1}, \zeta \in S^{k-1}, \theta \in [0,2 \pi)$, such that
\begin{equation*}
\gamma_{\underline{0},\kappa, \omega, \zeta, \theta}(a)=(n,a) \:.
\end{equation*}
Then by the computation done in remark \ref{invarianceofadmissible} and using the sectorial limit hypothesis, it follows that
\begin{eqnarray}
\label{res1_eqn5}
 v(n,a) =  \displaystyle\lim_{k \to \infty} u_{j_k}(n,a) &= & \displaystyle\lim_{k \to \infty} u_{j_k}(\gamma_{\underline{0},\kappa, \omega, \zeta, \theta}(a)) \nonumber \\
& = & \displaystyle\lim_{k \to \infty} u(a_{j_k} \cdot \gamma_{\underline{0},\kappa, \omega, \zeta, \theta}(a)) \nonumber \\
& = & \displaystyle\lim_{k \to \infty} u(\gamma_{\underline{0},\kappa, \omega, \zeta, \theta}(a_{j_k}a)) \nonumber \\
& = & L \:.
\end{eqnarray}
As $(n,a) \in \mathscr{S}$ was arbitrary, from (\ref{res1_eqn5}), it follows that $v$ identically equals to $L$ on $\mathscr{S}$.
Next set
\begin{equation}
\label{res1_eqn7}
\tilde{v}(n,a) := v(n,a) - L \:, \text{ for } (n,a) \in S \:.
\end{equation}
We write $\tilde{v} = Re(\tilde{v}) + i Im(\tilde{v})$. Note that both $Re(\tilde{v})$ and $Im(\tilde{v})$ are real-valued $\mathcal{L}^\beta$-harmonic functions and hence are real-analytic functions on $S$. Thus by definition (\ref{res1_eqn7}), we get that $\tilde{v}$ vanishes identically on $\mathscr{S}$. So $Re(\tilde{v})$ and $Im(\tilde{v})$ are also identically zero on $\mathscr{S}$. As $S$ is diffeomorphic to Euclidean upper half-space via an analytic diffeomorphism, $\partial \Gamma_{\kappa}(\underline{0})$ is a real-analytic submanifold, with sector being a relatively open subset of $\partial \Gamma_{\kappa}(\underline{0})$, Lemma \ref{peralta} now implies that $Re(\tilde{v})$ identically equals to zero on $\partial \Gamma_{\kappa}(\underline{0})$. The same is also true for $Im(\tilde{v})$. Now as $v$ (being limit of $u_{j_k}$) is bounded on each admissible region with vertex at $\underline{0}$, and hence so is $\tilde{v}$ and thus so are $Re(\tilde{v})$ and $Im(\tilde{v})$. So in particular for $\Gamma_{\kappa}(\underline{0})$, there exists $M>0$ such that
\begin{equation}
\label{res1_eqn9}
\displaystyle \sup_{(n,a) \in \Gamma_{\kappa}(\underline{0})} \left|Re(\tilde{v})\right| \le M \:.
\end{equation}
Now we choose and fix $\varepsilon \in (0,1)$ and $ \delta \in (0,1)$, and define for each $(n,a) \in S$
\begin{equation}
\label{res1_eqn10}
w(n,a):= Re(\tilde{v})(n,a) - M \varepsilon a^{2 \beta} - 2 M \bigintssss_{B(\underline{0}, \delta)} q^{i \beta}_a (n^{-1}_1 n) dm(n_1) \:.
\end{equation}
Here
\begin{equation*}
(n,a) \mapsto \bigintssss_{B(\underline{0}, \delta)} q^{i \beta}_a (n^{-1}_1 n) dm(n_1) \;,
\end{equation*}
is just the convolution of the kernel $q^{i \beta}_a$ with the indicator function of the ball $B(\underline{0}, \delta)$ on $N$, and hence defines a positive $\mathcal{L}^{\beta}$-harmonic function. Clearly,
\begin{equation*}
(n,a) \mapsto a^{2 \beta}  \:,\: \text{for } (n,a) \in S \:,
\end{equation*}
is also a positive $\mathcal{L}^{\beta}$-harmonic function. So in particular, $w$ (defined in (\ref{res1_eqn10})) is a real-valued $\mathcal{L}^{\beta}$-harmonic function. 

It follows from Lemma \ref{asympt} that there exists $a_{\delta} >0$ such that for all $a \le a_{\delta}$,
\begin{equation}
\label{res1_eqn12}
\bigintssss_{B(\underline{0}, \delta)} q^{i \beta}_a (n^{-1}_1 n) d\mu(n_1) \ge \frac{1}{2} \:,
\end{equation}
uniformly for $n \in B\left(\underline{0},\frac{\delta^2}{2 \tau_d}\right)$ \:. Since we are going to work in $\Gamma_{\kappa}(\underline{0})$, we have to make sure that $n \in B\left(\underline{0},\frac{\delta^2}{2 \tau_d}\right)$ (in order to apply Lemma \ref{asympt}). But this is easily done by noting,
\begin{equation}
\label{res1_eqn13}
\Gamma_{\kappa}(\underline{0}) \cap \left\{(n,a) \in S | a \le \frac{\delta^2}{2 \tau_d \kappa}\right\} \subset \left\{(n,a) \in S | n \in B\left(\underline{0}, \frac{\delta^2}{2 \tau_d}\right), a \le \frac{\delta^2}{2 \tau_d \kappa}\right\} \:.
\end{equation}
So we set,
\begin{equation}
\label{res1_eqn14}
a_{\varepsilon,\delta} = \min \left\{a_{\delta}\:,\:\frac{\delta^2}{2 \tau_d \kappa}\:,\: \varepsilon^{1/2\beta}\right\} \:.
\end{equation}
We now define $\Omega_{\varepsilon, \delta}$ to be the solid region enclosed by
\begin{eqnarray*}
&&  \Gamma_{\kappa}(\underline{0}) \cap \left\{a=\left(\frac{1}{\varepsilon}\right)^{1/2\beta}\right\} \:,\\
&& \partial \Gamma_{\kappa}(\underline{0}) \cap \left\{a_{\varepsilon,\delta} \le a \le \left(\frac{1}{\varepsilon}\right)^{1/2\beta}\right\} \:, \\
&& \Gamma_{\kappa}(\underline{0}) \cap \{a=a_{\varepsilon,\delta}\} \:.
\end{eqnarray*}
Also define, $\Omega_{\varepsilon}$ to be the solid region enclosed by
\begin{eqnarray*}
&&  \Gamma_{\kappa}(\underline{0}) \cap \left\{a=\left(\frac{1}{\varepsilon}\right)^{1/2\beta}\right\} \:,\\
&& \partial \Gamma_{\kappa}(\underline{0}) \cap \left\{\varepsilon^{1/2\beta} \le a \le \left(\frac{1}{\varepsilon}\right)^{1/2\beta}\right\} \:, \\
&& \Gamma_{\kappa}(\underline{0}) \cap \left\{a=\varepsilon^{1/2\beta}\right\} \:.
\end{eqnarray*}
Clearly $\Omega_{\varepsilon} \subset \Omega_{\varepsilon, \delta}$. Now for $(n,a) \in \Gamma_{\kappa}(\underline{0}) \cap \left\{a=\left(\frac{1}{\varepsilon}\right)^{1/2\beta}\right\} $ \:, by (\ref{res1_eqn9}), (\ref{res1_eqn10}), we have,
\begin{equation} \label{res1_eqn15}
 w(n,a)  \le  Re(\tilde{v})(n,a) - M \varepsilon a^{2 \beta} \le M - M \varepsilon \left(\left(\frac{1}{\varepsilon}\right)^{1/2\beta}\right)^{2 \beta} = 0 \:.
\end{equation}
Next for $(n,a) \in \partial \Gamma_{\kappa}(\underline{0}) \cap \left\{a_{\varepsilon,\delta} \le a \le \left(\frac{1}{\varepsilon}\right)^{1/2\beta}\right\}$ \:, we have,
\begin{equation}
\label{res1_eqn16}
w(n,a) \le Re(\tilde{v})(n,a) = 0 \:.
\end{equation}
Finally for $(n,a) \in \Gamma_{\kappa}(\underline{0}) \cap \{a=a_{\varepsilon,\delta}\}$, by (\ref{res1_eqn9}), (\ref{res1_eqn12}), (\ref{res1_eqn13}) and definition of $a_{\varepsilon, \delta}$, we get,
\begin{equation} \label{res1_eqn17}
w(n,a) \le  Re(\tilde{v})(n,a)  - 2 M \bigintssss_{B(\underline{0}, \delta)} q^{i \beta}_a (n^{-1}_1 n) dm(n_1) \le M - \left(2M \times \frac{1}{2}\right) =  0 \:.
\end{equation}
Thus combining (\ref{res1_eqn15})-(\ref{res1_eqn17}) and applying Lemma \ref{max1} on $w$, we get that $w(n,a)$ is smaller than zero for all $(n,a) \in \Omega_{\varepsilon,\delta}$. Then by definition of $a_{\varepsilon, \delta}$ (see (\ref{res1_eqn14})), as $\Omega_{\varepsilon}$ is a subset of $\Omega_{\varepsilon, \delta}$, we get that $w(n,a)$ is smaller than zero for all $(n,a) \in \Omega_{\varepsilon}$. Then by (\ref{res1_eqn10}), we get that for all $(n,a) \in \Omega_{\varepsilon}$
\begin{equation}
\label{res1_eqn18}
Re(\tilde{v})(n,a) \le M \varepsilon a^{2 \beta} + 2 M \bigintssss_{B(\underline{0}, \delta)} q^{i \beta}_a (n^{-1}_1 n) dm(n_1) \:.
\end{equation}
In (\ref{res1_eqn18}), $0< \delta <1$ is arbitrary and hence letting $\delta \to 0$, we get that for all $(n,a) \in \Omega_{\varepsilon}$
\begin{equation}
\label{res1_eqn19}
Re(\tilde{v})(n,a) \le M \varepsilon a^{2 \beta}   \:.
\end{equation}
We note that for any fixed $(n_1,a_1) \in \Gamma_{\kappa}(\underline{0})$, there exists $\varepsilon_1 \in (0,1)$ such that $(n_1,a_1) \in \Omega_{\varepsilon} \:,\: \text{for all} \: \varepsilon \in (0,\varepsilon_1) \:.$
Then (\ref{res1_eqn19}) implies that
\begin{equation*}
\label{res1_eqn20}
Re(\tilde{v})(n_1,a_1) \le \varepsilon M  a^{2 \beta}_1 \:,\: \text{for all} \: 0< \varepsilon < \varepsilon_1 \:.
\end{equation*}
It follows by letting $\varepsilon \to 0$, that
\begin{equation*}
Re(\tilde{v})(n_1,a_1) \le 0 \:.
\end{equation*}
Since, $(n_1,a_1) \in \Gamma_{\kappa}(\underline{0})$ was arbitrary, it follows that $Re(\tilde{v})$ is smaller than zero on $\Gamma_{\kappa}(\underline{0})$. Now starting off with $-Re(\tilde{v})$, in place of $Re(\tilde{v})$, and repeating the argument above we get that $-Re(\tilde{v})$  is smaller than zero on $\Gamma_{\kappa}(\underline{0})$. Hence $Re(\tilde{v})$ vanishes identically on $\Gamma_{\kappa}(\underline{0})$. Real-analyticity of $Re(\tilde{v})$ then implies that it vanishes identically on $S$. Similarly, $Im(\tilde{v})$ vanishes identically on $S$. Thus $\tilde{v}$ vanishes identically on $S$.
Then by (\ref{res1_eqn7}), it follows that $v$ identically equals to $L$ on $S$.
Hence $\{u_{j_k}\}_{k=1}^\infty$ converges to the constant function $L$ normally on $S$. Also note that
\begin{equation*}
u(n_j,a_j)=u\left(\delta_{a_j}\left(\delta_{{a_j}^{-1}} (n_j)\right),a_j\right) = u_j \left(\left(\delta_{{a_j}^{-1}} (n_j)\right),1\right) \:.
\end{equation*}
Now since, $\{(n_j,a_j)\}_{j=1}^\infty \subset \Gamma_{\alpha}(\underline{0})$, it follows that
\begin{equation*}
d\left(\delta_{{a_j}^{-1}} (n_j)\right) = \frac{1}{a_j} d(n_j) < \frac{1}{a_j} \alpha a_j =\alpha \:.
\end{equation*}
 Hence,
\begin{equation*}
\left\{\left(\left(\delta_{{a_j}^{-1}} (n_j)\right),1\right)\right\}_{j=1}^\infty \subset \overline{B(\underline{0}, \alpha)} \times \{1\} \:.
\end{equation*}
Thus in particular, we also have,
\begin{equation*}
\left\{\left(\left(\delta_{{a_{j_k}}^{-1}} (n_{j_k})\right),1\right)\right\}_{k=1}^\infty \subset \overline{B(\underline{0}, \alpha)} \times \{1\} \:.
\end{equation*}
So we have,
\begin{equation*}
 L' = \displaystyle\lim_{j \to \infty} u(n_j,a_j) = \displaystyle\lim_{k \to \infty} u(n_{j_k},a_{j_k}) = \displaystyle\lim_{k \to \infty} u_{j_k} \left(\left(\delta_{{a_{j_k}}^{-1}} (n_{j_k})\right),1\right) = L \:.
\end{equation*}
This is a contradiction. This completes the proof for step 1. 

\textit{\textbf{Step 2:}The general case.} 

We can always reduce matters to step 1. A standard argument using the fact that convolution commutes with translation, along with the left-invariance of the quasi-metric implies that it suffices to prove the result for $n_0=\underline{0}$. We then look at the restriction of $\mu$ in a ball on $N$ and argue exactly as in Theorem $4.2$, \cite{Ray}. This completes the proof of Theorem \ref{result_1}.
\end{proof}
Now we apply the last theorem to obtain the following result for certain positive eigenfunctions of the Laplace-Beltrami operator $\mathcal{L}$, on $S$.
\begin{corollary}
\label{result_2}
Suppose that $u$ is a positive eigenfunction of $\mathcal{L}$ on $S$ with eigenvalue $\beta^2 - \rho^2$, where $\beta >0$, and that $n_0 \in N$. If the function
\begin{equation*}
(n,a) \mapsto a^{\beta - \rho} u(n,a) \:, \:\: (n,a) \in S \:,
\end{equation*}
has sectorial limit $L \in [0,\infty)$ at $n_0$, then it has admissible limit $L$ at $n_0$.
\end{corollary}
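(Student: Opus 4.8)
The plan is to reduce Corollary \ref{result_2} to Theorem \ref{result_1} by passing from the eigenfunction $u$ to the convolution integral $\mathcal{Q}_{i\beta}[\mu]$. First I would invoke Lemma \ref{positiveeigen}: since $u$ is a positive eigenfunction of $\mathcal{L}$ with eigenvalue $\beta^2-\rho^2$, there is a unique positive boundary measure $\mu$ on $N$ and a unique constant $C\ge 0$ with $u(n,a)=Ca^{\beta+\rho}+\mathcal{P}_{i\beta}[\mu](n,a)$ as in (\ref{repofu}). Positivity of $\mu$ forces $|\mu|=\mu$, so well-definedness of $\mathcal{P}_{i\beta}[\mu]$ means $\mu\in\M_{\beta}$. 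Using (\ref{plamdaandqlamda}) with $\lambda=i\beta$, so that $a^{Q/2+i\lambda}=a^{\rho-\beta}$, I would rewrite the normalized eigenfunction as
\begin{equation*}
a^{\beta-\rho}u(n,a)=Ca^{2\beta}+\mathcal{Q}_{i\beta}[\mu](n,a),\qquad (n,a)\in S.
\end{equation*}

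Since $2\beta>0$, the term $Ca^{2\beta}$ tends to $0$ as $a\to 0$ along every ray, and likewise in every admissible approach inside some $\Gamma_{\alpha}(n_0)$. Hence the hypotheses and conclusions for $a^{\beta-\rho}u$ and for $\mathcal{Q}_{i\beta}[\mu]$ are interchangeable: the sectorial-limit hypothesis on $a^{\beta-\rho}u$ transfers verbatim to $\mathcal{Q}_{i\beta}[\mu]$, which therefore has sectorial limit $L$ (and in particular a finite radial limit along at least one ray) at $n_0$, and conversely an admissible limit for $\mathcal{Q}_{i\beta}[\mu]$ yields the same admissible limit for $a^{\beta-\rho}u$.

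The one hypothesis of Theorem \ref{result_1} still to be checked is (\ref{measure_condition}); verifying it is the only real content, and it is exactly the implication ($\mathscr{H}_2$)$\Rightarrow$($\mathscr{H}_1$) announced in Remark \ref{conditions_on_the_boundary_measure}. Here I would use positivity again, so that $\mathcal{Q}_{i\beta}[|\mu|]=\mathcal{Q}_{i\beta}[\mu]$. Because $\mathcal{Q}_{i\beta}[\mu]$ has a finite radial limit along some ray $\gamma_{n_0,\alpha,\omega,\zeta,\theta}$, there are $t_0>0$ and $M>0$ with $\mathcal{Q}_{i\beta}[|\mu|](\gamma_{n_0,\alpha,\omega,\zeta,\theta}(a))\le M$ for all $0<a<t_0$. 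Part $(a)$ of Lemma \ref{HL} then gives
\begin{equation*}
C_{\alpha,\beta}\,\frac{|\mu|(B(n_0,a))}{m(B(n_0,a))}\le \mathcal{Q}_{i\beta}[|\mu|](\gamma_{n_0,\alpha,\omega,\zeta,\theta}(a))\le M,\qquad 0<a<t_0,
\end{equation*}
which is precisely (\ref{measure_condition}).

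Finally, with $\mu\in\M_{\beta}$ satisfying (\ref{measure_condition}) and $\mathcal{Q}_{i\beta}[\mu]$ having sectorial limit $L$ at $n_0$, Theorem \ref{result_1} yields that $\mathcal{Q}_{i\beta}[\mu]$ has admissible limit $L$ at $n_0$. Transferring back through the displayed identity (the $Ca^{2\beta}$ term vanishing as $a\to 0$ inside any $\Gamma_{\alpha}(n_0)$) shows that $a^{\beta-\rho}u$ has admissible limit $L$, as desired. The main obstacle is conceptual rather than technical: one must recognize that positivity together with a single finite radial limit already forces the maximal-function bound (\ref{measure_condition}) via Lemma \ref{HL}, so that all the genuine analytic difficulty has been absorbed into Theorem \ref{result_1}.
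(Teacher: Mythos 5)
Your proposal is correct and follows essentially the same route as the paper: the representation $a^{\beta-\rho}u(n,a)=Ca^{2\beta}+\mathcal{Q}_{i\beta}[\mu](n,a)$ via Lemmas \ref{lbetaharmonic}, \ref{positiveeigen} and (\ref{plamdaandqlamda}), then the key observation that positivity plus one finite radial limit combined with part $(a)$ of Lemma \ref{HL} yields (\ref{measure_condition}), and finally an application of Theorem \ref{result_1}. The only cosmetic difference is that the paper disposes of the constant term by simply assuming $C=0$, whereas you carry $Ca^{2\beta}$ along and note that it vanishes in every admissible approach; both handle it identically in substance.
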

\begin{proof}
By defining,
\begin{equation*}
%\label{res2_eqn1}
F(n,a):= a^{\beta - \rho} u(n,a) \:, \: (n,a) \in S \:,
\end{equation*}
we see from Lemmas \ref{lbetaharmonic}, \ref{positiveeigen} and (\ref{plamdaandqlamda}) that there exists a unique positive $\mu \in \mathcal{M}_{\beta}$ and a unique non-negative constant $C$ such that
\begin{equation}
\label{res2_eqn2}
F(n,a)= Ca^{2 \beta} + \mathcal{Q}_{i \beta}[\mu](n,a) \:, \: (n,a) \in S \:.
\end{equation}
As we are interested in the limit, when $a$ tends to $0$, we may and do assume that $C=0$ in (\ref{res2_eqn2}). Then by the hypothesis regarding sectorial limit, there exists $\alpha >0$, $\omega \in S^{2p-1}$, $\zeta \in S^{k-1}$, $\theta \in [0,2 \pi)$ such that
\begin{equation*}
\displaystyle\lim_{a \to 0} \mathcal{Q}_{i \beta}[\mu] \left(\gamma_{n_0,\alpha,\omega,\zeta,\theta}(a)\right)=L \:.
\end{equation*}
Thus in particular, there exists $t_0 >0$ such that
\begin{equation*}
%\label{res2_eqn3}
\displaystyle\sup_{0<a< t_0} \mathcal{Q}_{i \beta}[\mu] \left(\gamma_{n_0,\alpha,\omega,\zeta,\theta}(a)\right) < \infty \:.
\end{equation*}
Then by $(a)$ of Lemma \ref{HL}, we get
\begin{equation*}
%\label{res2_eqn4}
\displaystyle\sup_{0<a< t_0} \frac{\mu(B(n_0,a))}{m(B(n_0,a))} < \infty \:.
\end{equation*}
The result now follows from Theorem \ref{result_1}.
\end{proof}
Corollary \ref{result_2} gives a stronger chracterization (compared to Theorem \ref{result_on_admissiblelimit}) of existence of strong derivative of the boundary measure in terms of the boundary behaviour of the corresponding eigenfunction. The next result illustrates it.
\begin{corollary}
\label{result_3}
Suppose that $u$ is a positive eigenfunction of $\mathcal{L}$ in $S$ with eigenvalue $\beta^2 - \rho^2$, where $\beta >0$, with boundary measure $\mu$, and that $n_0 \in N, L \in [0,\infty)$. Then the strong derivative of $\mu$ at $n_0$ equals $L$ if and only if the function $(n,a) \mapsto a^{\beta - \rho} u(n,a)$ has sectorial limit $L$ at $n_0$ .
\end{corollary}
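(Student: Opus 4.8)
The plan is to deduce this corollary directly by combining the characterization of the strong derivative in terms of admissible convergence (Theorem \ref{result_on_admissiblelimit}) with the sectorial-to-admissible implication established in Corollary \ref{result_2}, together with the elementary observation that admissible convergence is a strictly stronger notion than sectorial convergence. Throughout, write $F(n,a) = a^{\beta-\rho} u(n,a)$.

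First I would handle the ``only if'' direction, where $\mu$ has strong derivative $L$ at $n_0$. By Theorem \ref{result_on_admissiblelimit}, $F$ then has admissible limit $L$ at $n_0$, that is, $\lim_{a\to 0,\,(n,a)\in\Gamma_{\alpha}(n_0)} F(n,a) = L$ for every $\alpha > 0$. To pass to a sectorial limit, fix any $\kappa > 0$ and any ray $\gamma_{n_0,\kappa,\omega,\zeta,\theta}$ lying on $\partial\Gamma_{\kappa}(n_0)$. Along this ray one has $d(n_0^{-1}n) = \kappa a < 2\kappa a$, so the ray is contained in the admissible domain $\Gamma_{2\kappa}(n_0)$, and $\gamma_{n_0,\kappa,\omega,\zeta,\theta}(a) \to (n_0, 0)$ as $a \to 0$. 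Hence the admissible limit (applied with aperture $2\kappa$) forces $\lim_{a\to 0} F(\gamma_{n_0,\kappa,\omega,\zeta,\theta}(a)) = L$ along every such ray, so in particular $F$ has radial limit $L$ along all rays of any sector of $\partial\Gamma_{\kappa}(n_0)$, giving that $F$ has sectorial limit $L$ at $n_0$.

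For the converse, suppose $F$ has sectorial limit $L \in [0,\infty)$ at $n_0$. Since $u$ is a positive eigenfunction of $\mathcal{L}$ with eigenvalue $\beta^2 - \rho^2$, Corollary \ref{result_2} applies verbatim and yields that $F$ has admissible limit $L$ at $n_0$. Invoking Theorem \ref{result_on_admissiblelimit} once more, now in the reverse direction, we conclude that $\mu$ has strong derivative $L$ at $n_0$. This establishes both implications.

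I expect no genuine obstacle here: the entire analytic content has already been isolated in Theorem \ref{result_on_admissiblelimit} and in Corollary \ref{result_2} (whose proof in turn rests on the maximum-principle argument of Theorem \ref{result_1}). The only point requiring care is the geometric remark that each ray of a sector on $\partial\Gamma_{\kappa}(n_0)$ sits inside the strictly wider admissible cone $\Gamma_{2\kappa}(n_0)$, so that admissible convergence automatically produces the required radial limits. The positivity restriction $L \in [0,\infty)$ enters only through the applicability of Theorem \ref{result_on_admissiblelimit} and Corollary \ref{result_2}.
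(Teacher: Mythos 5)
Your proposal is correct and follows essentially the same route as the paper: the paper likewise proves the corollary by chaining the equivalence of admissible convergence with the strong derivative (Theorem \ref{result_on_admissiblelimit}), the sectorial-to-admissible implication (Corollary \ref{result_2}), and the trivial observation that any sector on $\partial\Gamma_{\alpha}(n_0)$ lies inside a slightly wider cone $\Gamma_{\alpha'}(n_0)$, $\alpha'>\alpha$ (your choice $\alpha'=2\kappa$ is the same remark). There is no substantive difference between the two arguments.
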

\begin{proof}
The result follows from the equivalence of the following statements.
\begin{itemize}
\item[i)] The function $(n,a) \mapsto a^{\beta - \rho} u(n,a)$ has sectorial limit $L$ at $n_0$ .
\item[ii)] The function $(n,a) \mapsto a^{\beta - \rho} u(n,a)$ has admissible limit $L$ at $n_0$ .
\item[iii)] The boundary measure $\mu$ has strong derivative $L$ at $n_0$ .
\end{itemize}
$ii)$ implies $i)$ is the trivial direction, since any sector on some $\partial \Gamma_{\alpha}(n_0)$ is contained in any admissible region of the form $\Gamma_{\alpha'}(n_0)$, where $\alpha' > \alpha$. $i)$ implies $ii)$ is true due to Corollary \ref{result_2}. Hence, $i)$ is equivalent to $ii)$. Finally, $ii)$ is equivalent to $iii)$ follows from Theorem \ref{result_on_admissiblelimit}.
\end{proof}

\section{The real hyperbolic space}
As $N$ has been assumed to be noncommutative, the class of harmonic $NA$ groups does not include the degenerate case of the real hyperbolic space. We view the real hyperbolic space in the upper half-space model
\begin{equation*}
\mathbb H^l = \{(x,y) | x \in \mathbb R^{l-1}, y \in (0,\infty)\} \:,\: l \ge 2 \;,
\end{equation*}
equipped with the standard hyperbolic metric
\begin{equation*}
ds^2 = \frac{1}{y^2} \left(dx^2 + dy^2 \right) \:,
\end{equation*}
with the boundary being identified with $\mathbb R^{l-1}$. Note that the notion of admissible convergence here reduces to the usual non-tangential convergence and the rays defined as in definition \ref{sectordefn} become usual rays in the setting of euclidean spaces. Also note that in this case the homogeneous dimension is $Q= 2 \rho = l-1$ , and the corresponding differential operators are given by,
\begin{eqnarray*}
&& \mathcal{L} = y^2 \left(\Delta_{\mathbb R^{l-1}} + \frac{\partial^2}{\partial y^2}\right) - (l-2) y \frac{\partial}{\partial y} \:, \\
&& \mathcal{L}^\beta = y^2 \left(\Delta_{\mathbb R^{l-1}} + \frac{\partial^2}{\partial y^2}\right) - (2 \beta -1) y \frac{\partial}{\partial y} \:, \:\text{ for }\: \beta > 0 \:.\\
\end{eqnarray*}
Proceeding as in section $4$, we can obtain the obvious analogues of all the results. Now, when $l=2$, corollary \ref{result_2} can be proved in the following form,
\begin{theorem}
\label{real_hyp_result1}
Suppose that $u$ is a positive eigenfunction of the Laplace-Beltrami operator on $\mathbb H^2$ with eigenvalue $\beta^2 - \frac{1}{4}$ ,\: for $\beta >0$, and boundary measure $\mu$, and let $x_0 \in \mathbb R$. If the function
\begin{equation*}
(x,y) \mapsto y^{\beta - \frac{1}{2}} u(x,y) \:,
\end{equation*}
admits a radial limit $L$ along two rays at $x_0$, then it has non-tangential limit $L$ at $x_0$.
\end{theorem}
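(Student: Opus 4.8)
The plan is to run the proof of Theorem \ref{result_1} (more precisely of Corollary \ref{result_2}) almost verbatim; the only genuinely new feature is geometric, reflecting that in $\mathbb{H}^2$ the topological boundary $\partial\Gamma_\kappa(x_0)$ of a non-tangential cone is not a connected real-analytic manifold swept out by a continuum of rays (as a sector is in the non-degenerate case), but merely the disjoint union of the two rays bounding the cone. First I would set $F(x,y):=y^{\beta-1/2}u(x,y)$; by the $\mathbb{H}^2$-analogues of Lemmas \ref{lbetaharmonic} and \ref{positiveeigen} together with \eqref{plamdaandqlamda}, $F$ is $\mathcal{L}^\beta$-harmonic and equals $Cy^{2\beta}+\mathcal{Q}_{i\beta}[\mu]$ for a positive $\mu$ and a constant $C\ge 0$, and since only the behaviour as $y\to 0$ matters I would discard $Cy^{2\beta}$. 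Using left-invariance under the translations $x\mapsto x+b$ I would reduce to $x_0=0$, and by restricting $\mu$ to a ball (exactly as in Step 2 of the proof of Theorem \ref{result_1}) to the case of finite $\mu$. Feeding the finiteness of the radial limit along one of the two rays into part $(a)$ of the $\mathbb{H}^2$-analogue of Lemma \ref{HL} yields $\sup_{0<a<t_0}\mu(B(0,a))/m(B(0,a))<\infty$; combined with the bound for large radii coming from finiteness of $\mu$, this gives $M_{HL}(\mu)(0)<\infty$, whence part $(b)$ of the same lemma makes $F$ bounded on every non-tangential cone with vertex at $0$.

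Next I would argue by contradiction as in Step 1: if $F$ had no non-tangential limit $L$ at $0$, there would be a sequence $(x_j,y_j)\in\Gamma_\alpha(0)$ with $y_j\to 0$ and, after passing to a subsequence using the boundedness just established, $F(x_j,y_j)\to L'\neq L$. I would form the dilates $F_j(x,y):=F(y_j x,y_j y)$, which by the analogue of Lemma \ref{dilationof_Lbeta} are again $\mathcal{L}^\beta$-harmonic, and which are locally bounded because every compact subset of $\mathbb{H}^2$ lies inside a cone and the cones are $A$-invariant (Remark \ref{invarianceofadmissible}). The Montel-type Lemma \ref{our_bar} then provides a subsequence converging normally to an $\mathcal{L}^\beta$-harmonic function $v$, bounded on every cone with vertex at $0$.

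The crux is the step where, in the general case, a sector together with real-analyticity (Lemma \ref{peralta}) forced $v\equiv L$ on the whole connected boundary $\partial\Gamma_\kappa$. Here let $\gamma_i(y)=(c_iy,y)$, $i=1,2$, with $c_1<c_2$, be the two rays along which $F$ has radial limit $L$. Exactly as in \eqref{res1_eqn5}, the $A$-invariance of each ray together with the radial-limit hypothesis gives $v\equiv L$ on each $\gamma_i$. A single ray already determines $v$ on that entire ray, so there is nothing for Lemma \ref{peralta} to propagate \emph{within} one ray; but the two rays are the two disjoint connected components of the boundary of the open wedge $W:=\{(x,y):c_1y<x<c_2y\}$, and Lemma \ref{peralta} cannot bridge disconnected components. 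It is precisely this disconnectedness of $\partial\Gamma_\kappa$ in $\mathbb{H}^2$ that renders the hypothesis of \emph{two} rays unavoidable, and with two rays in hand $v\equiv L$ on the full topological boundary of $W$, which is what the maximum principle requires.

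It then remains to run the barrier argument of Step 1 on $W$ in place of $\Gamma_\kappa$. Writing $\tilde v=v-L$ and $w=\mathrm{Re}(\tilde v)-M\varepsilon y^{2\beta}-2M\int_{B(0,\delta)}q_y^{i\beta}(n_1^{-1}n)\,dm(n_1)$, I would bound $w$ on the boundary of the truncated region $\Omega_{\varepsilon,\delta}$ whose two slanted sides now lie on $\gamma_1,\gamma_2$ (where $\mathrm{Re}(\tilde v)=0$) and whose top and bottom are horizontal slices, using that $W\subset\Gamma_\eta(0)$ with $\eta=\max(|c_1|,|c_2|)$ (so $v$, hence $\mathrm{Re}(\tilde v)$, is bounded by some $M$ on $W$), that points of $W$ at small height lie near the vertex (so the $\mathbb{H}^2$-analogue of Lemma \ref{asympt} makes the integral term $\ge 1/2$ on the bottom slice), and the maximum principle Lemma \ref{max1}. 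Letting $\delta\to 0$ and then $\varepsilon\to 0$, and repeating with $-\mathrm{Re}(\tilde v)$, gives $\mathrm{Re}(\tilde v)\equiv 0$ on $W$, and likewise $\mathrm{Im}(\tilde v)\equiv 0$, so $v\equiv L$ on the open set $W$; real-analyticity of the $\mathcal{L}^\beta$-harmonic $v$ upgrades this to $v\equiv L$ on all of $\mathbb{H}^2$. Finally, since $(x_j,y_j)\in\Gamma_\alpha(0)$, the dilated points $(\delta_{y_j^{-1}}(x_j),1)$ lie in the compact set $\overline{B(0,\alpha)}\times\{1\}$, so normal convergence gives $F(x_j,y_j)=F_j(\delta_{y_j^{-1}}(x_j),1)\to L$, contradicting $L'\neq L$. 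The only real obstacle beyond routine bookkeeping is the geometric observation of the previous paragraph: one must recognize that real-analyticity propagates $v$ only within a connected boundary component, so that each of the two components of $\partial\Gamma_\kappa$ in $\mathbb{H}^2$ must be reached by its own radial-limit hypothesis.
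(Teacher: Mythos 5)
Your proposal is correct and follows essentially the route the paper intends: the paper leaves the proof of Theorem \ref{real_hyp_result1} implicit (``proceeding as in section 4''), and its written proof of Theorem \ref{real_hyp_result2} runs exactly your argument --- transfer to $\mathcal{L}^\beta$-harmonic functions via the boundary representation, the dilation/normal-families contradiction scheme, and the maximum-principle barrier on the wedge bounded by the two rays on which the limit function vanishes. Your observation that the two-ray hypothesis is forced by the disconnectedness of $\partial\Gamma_\kappa(x_0)$ in $\mathbb{H}^2$ (so that Lemma \ref{peralta} cannot propagate vanishing across components, each ray supplying its own boundary component of the wedge) is precisely the geometric point that distinguishes the degenerate case from the sectorial argument of Theorem \ref{result_1}.
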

Some similar results were obtained in the context of $\R^{2}_+$ regarding the Poisson integral and the Gauss-Weierstrass integral of measures on $\R$, in \cite{Sa}. Now the most natural question is in the setting of Theorem \ref{real_hyp_result1}, what can be said if two different values are assumed along two rays. Of course there does not exist any non-tangential limit, but as it turns out, we can conclude about radial limits along any such ray for $\mathbb H^2$. Analogous result holds for Poisson integral of positive measures on $\R^2_+$ (see \cite{UR}).
\begin{theorem}
\label{real_hyp_result2}
Suppose that $u$ is a positive eigenfunction of the Laplace-Beltrami operator on $\mathbb H^2$ with eigenvalue $\beta^2 - \frac{1}{4}$ ,\: for $\beta >0$, and boundary measure $\mu$, and let $x_0 \in \mathbb R$. Let $\alpha_1, \alpha_2$ be two distinct real numbers and $L_1,L_2 \in [0,\infty)$ such that
\begin{equation*}
\displaystyle\lim_{y \to 0} y^{\beta - \frac{1}{2}} u(x_0 + \alpha_1 y,y) =L_1 \:, \:\: \text{and} \:\: \displaystyle\lim_{y \to 0} y^{\beta - \frac{1}{2}} u(x_0 + \alpha_2 y,y) =L_2 \:,
\end{equation*}
then for all $\alpha \in \mathbb R$,
\begin{equation*}
\displaystyle\lim_{y \to 0} y^{\beta - \frac{1}{2}} u(x_0 + \alpha y,y) =  \frac{L_2 - L_1}{\alpha_2 - \alpha_1} (\alpha - \alpha_1) +L_1 \:.
\end{equation*}
\end{theorem}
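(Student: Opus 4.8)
The plan is to adapt the blow-up and normal-families argument from the proof of Theorem \ref{result_1} to the one-dimensional boundary $\mathbb{R}$ of $\mathbb{H}^2$, replacing the sectorial hypothesis by the two prescribed radial limits and replacing the final interior argument by a rigidity statement for dilation-invariant $\mathcal{L}^\beta$-harmonic profiles. After translating so that $x_0=0$, I set $F(x,y):=y^{\beta-\frac12}u(x,y)$; by Lemma \ref{lbetaharmonic} this is $\mathcal{L}^\beta$-harmonic, and by Lemma \ref{positiveeigen} together with (\ref{plamdaandqlamda}) there are a positive $\mu\in\mathcal{M}_\beta$ and a constant $C\ge 0$ with $F=Cy^{2\beta}+\mathcal{Q}_{i\beta}[\mu]$; since $Cy^{2\beta}\to 0$ as $y\to 0$ I discard this term and work with $F=\mathcal{Q}_{i\beta}[\mu]$. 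Feeding the two finite radial limits into part $(a)$ of Lemma \ref{HL} (with $\mu=|\mu|$, by positivity) yields $\sup_{0<a<t_0}\mu(B(0,a))/m(B(0,a))<\infty$, hence $M_{HL}(\mu)(0)<\infty$, and part $(b)$ of Lemma \ref{HL} then makes $F$ bounded on every cone $\Gamma_\eta(0)$.

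Next I blow up. Setting $F_r(x,y):=F(rx,ry)$, each $F_r$ is again positive and $\mathcal{L}^\beta$-harmonic by Lemma \ref{dilationof_Lbeta}, and since every compact set lies in some cone $\Gamma_\eta(0)$, which is invariant under the $A$-action (Remark \ref{invarianceofadmissible}), the family $\{F_r\}_{0<r<1}$ is locally bounded uniformly in $r$. Lemma \ref{our_bar} then extracts a sequence $r_k\to 0$ along which $F_{r_k}$ converges normally to a positive $\mathcal{L}^\beta$-harmonic function $v$. The decisive observation is that the existence of the radial limit $L_1$ forces $v$ to be the \emph{constant} $L_1$ along the whole ray $R_1=\{(\alpha_1 c,c):c>0\}$: for each fixed $c>0$, $v(\alpha_1 c,c)=\lim_k F(\alpha_1(r_kc),r_kc)$ is a radial limit of $F$ along slope $\alpha_1$ at heights $r_kc\to 0$, hence equals $L_1$; likewise $v\equiv L_2$ on $R_2=\{(\alpha_2 c,c)\}$.

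Thus $v$ is a bounded, positive $\mathcal{L}^\beta$-harmonic function that is constant on each of the two rays $R_1,R_2$. I now prove the rigidity that $v$ is dilation-invariant, $v(x,y)=\phi(x/y)$ for a single profile $\phi$ solving the second-order linear ODE obtained by substituting $v=\phi(x/y)$ into $\mathcal{L}^\beta v=0$, subject to $\phi(\alpha_1)=L_1$ and $\phi(\alpha_2)=L_2$. To this end I let $w$ denote the candidate dilation-invariant solution of that ODE with these two values and compare it with $v$ on the open cone $\mathcal{C}$ bounded by $R_1,R_2$, running the same $\varepsilon$--$\delta$ barrier scheme as in Step 1 of Theorem \ref{result_1}: on the lateral boundary $R_1\cup R_2$ one has $v-w=0$ by construction, and I control the two non-compact ends (the slice at large height and the singular tip at the origin) by adding the positive $\mathcal{L}^\beta$-harmonic barriers $\varepsilon y^{2\beta}$ and the truncated-kernel convolution $\int_{B(0,\delta)}q^{i\beta}_y(\cdot)\,dm$, applying the maximum principle of Lemma \ref{max1} on truncated subregions and letting $\varepsilon,\delta\to 0$. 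This forces $v\equiv w$ on $\mathcal{C}$, so $v$ there agrees with the dilation-invariant profile; since both $v$ and $w$ are real-analytic, Lemma \ref{peralta} propagates the identity from the open cone to all of $\mathbb{H}^2$. Solving the profile ODE with the two interpolation conditions then gives the radial limit along an arbitrary slope $\alpha$ as the affine expression $\frac{L_2-L_1}{\alpha_2-\alpha_1}(\alpha-\alpha_1)+L_1$ asserted in the statement; and because every subsequential blow-up limit is pinned to this same profile by $L_1,L_2$, the radial limit along each slope exists (not merely along a subsequence), for every $\alpha\in\mathbb{R}$.

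I expect the main obstacle to be precisely this rigidity step: extracting dilation invariance and uniqueness of $v$ from only two radial values. The delicate points are that the cone $\mathcal{C}$ is unbounded and carries the singular tip at the origin, so Lemma \ref{max1} (stated for bounded domains) must be applied on truncations with barriers simultaneously dominating the high slice and the tip, exactly as in Theorem \ref{result_1}; and that two boundary rays determine $\phi$ only on the sector between them, so the real-analytic continuation furnished by Lemma \ref{peralta} is what reaches all real slopes. Verifying the uniform bound on $v-w$ over $\mathcal{C}$ and the correct signs of the barrier terms, so that each truncated comparison is legitimate, is the remaining technical care.
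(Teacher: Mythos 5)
Your overall architecture---reduction via Lemmas \ref{lbetaharmonic} and \ref{positiveeigen}, the Hardy--Littlewood control of Lemma \ref{HL}, the blow-up with normal-families extraction via Lemma \ref{our_bar}, and the $\varepsilon$--$\delta$ barrier scheme with Lemma \ref{max1} on the cone between the two rays---is the same as the paper's; the paper diverges from you only at the crux. There, instead of comparing $v$ with a dilation-invariant ODE profile, it passes to the coordinates $(x,y)\mapsto(\eta y,y)$ and subtracts the function affine in $\eta$, asserting that this function is $\mathcal{L}^\beta$-harmonic, which reduces matters to the vanishing-on-two-rays situation handled as in Theorem \ref{result_1}. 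The genuine gap in your proposal is the final, unverified sentence of your rigidity step: solving the profile ODE with $\phi(\alpha_1)=L_1$, $\phi(\alpha_2)=L_2$ does \emph{not} produce the affine expression. Substituting $v=\phi(x/y)$ into $\mathcal{L}^\beta v=0$, with $\mathcal{L}^\beta=y^2(\partial_x^2+\partial_y^2)-(2\beta-1)y\partial_y$, gives $(1+\eta^2)\phi''(\eta)+(2\beta+1)\eta\,\phi'(\eta)=0$, whose solution space is spanned by $1$ and $\Phi_\beta(\eta)=\int_0^\eta(1+t^2)^{-\beta-\frac{1}{2}}\,dt$; the function $\Phi_\beta$ is bounded and strictly increasing, not affine. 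Hence your argument, if completed, pins the radial limit at slope $\alpha$ to $L_1+(L_2-L_1)\bigl(\Phi_\beta(\alpha)-\Phi_\beta(\alpha_1)\bigr)/\bigl(\Phi_\beta(\alpha_2)-\Phi_\beta(\alpha_1)\bigr)$, which differs from the stated formula whenever $L_1\neq L_2$, so as written your final step fails to deliver the theorem.

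Moreover, this cannot be patched by a different choice of profile: a direct check gives $\mathcal{L}^\beta(x/y)=(2\beta+1)\,x/y\neq 0$, so the affine-in-slope function is not $\mathcal{L}^\beta$-harmonic; your profile ODE is the correct computation, and it is in fact at odds with the transformed expression of $\mathcal{L}^\beta$ used in the paper's proof, which omits the cross term $-2\eta y\,\partial_\eta\partial_y$ and the drift $(2\beta+1)\eta\,\partial_\eta$ produced by the substitution $\eta=x/y$. The test case $\beta=\tfrac{1}{2}$ (so that $y^{\beta-1/2}u=u$ is positive harmonic, eigenvalue $0$) makes the tension concrete: $u(x,y)=\arg(x+iy)$, the Poisson integral of $\pi\,\mathbf{1}_{(-\infty,0)}\,dm$, has radial limit $\mathrm{arccot}\,\alpha$ along the ray of slope $\alpha$---affine in the \emph{angle}, matching $\Phi_{1/2}=\arctan$ and the classical two-ray theorem of Ramey--Ullrich, but not affine in $\alpha$; the affine-in-slope law would even assign negative limits to a positive function for large $|\alpha|$. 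Your barrier comparison itself is sound and runs exactly as Step 1 of Theorem \ref{result_1} (one small bookkeeping point: your deduction $M_{HL}(\mu)(0)<\infty$ from the two radial limits also uses the finiteness of $\mu$, i.e.\ the Step 2 reduction of Theorem \ref{result_1}, which you should record), but the interpolation law it establishes is the $\Phi_\beta$-affine one, not the one asserted in the statement.
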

\begin{proof}
We set
\begin{equation*}
L(\alpha) = \frac{L_2 - L_1}{\alpha_2 - \alpha_1} (\alpha - \alpha_1) +L_1 \:,\:\: \alpha \in \R.
\end{equation*}
Just as in the proof of Theorem \ref{result_1}, we only work out the case when $x_0=0$ and $\mu$ is finite. As in the proof of corollary \ref{result_2}, we transfer the problem in terms of $\mathcal{L}^{\beta}$-harmonic functions. Again we argue by contradiction. So suppose there exists $\alpha \in \mathbb R$ such that there exists a sequence $\{(\alpha y_j,y_j)\}_{j=1}^\infty$, converging to $(0,0)$, but the sequence $\mathcal{Q}_{i \beta}[\mu](\alpha y_j,y_j)$ does not converge to $L(\alpha)$. Then proceeding as in the proof of Theorem \ref{result_1}, we get hold of an $\mathcal{L}^{\beta}$-harmonic function $v$, obtained as a normal limit of some subsequence of dilates of $\mathcal{Q}_{i \beta}[\mu]$. Now we consider the change of coordinates
\begin{equation} \label{change_of_coord}
(x,y) \mapsto (\eta y,y)\:\: \text{(for } \eta \text{ varying over } \mathbb R) \:.
\end{equation}
In the above transformed coordinates, the differential operator  $\mathcal{L}^\beta$ takes the form
\begin{equation*}
\mathcal{L}^\beta = \frac{\partial^2}{\partial \eta^2} + y^2 \frac{\partial^2}{\partial y^2} - (2 \beta -1) y \frac{\partial}{\partial y} \:.
\end{equation*}
 Then we note that in the transformed coordinates (\ref{change_of_coord}), the function
\begin{equation*}
F(\eta y,y) := \frac{L_2 - L_1}{\alpha_2 - \alpha_1} (\eta - \alpha_1) +L_1 \:,
\end{equation*}
is an $\mathcal{L}^\beta$-harmonic function in $\mathbb H^2$. Thus
\begin{equation*}
\tilde{v}(x,y) := v(x,y) - F(x,y) \:, \: (x,y) \in \mathbb H^2 \:,
\end{equation*}
is an $\mathcal{L}^\beta$-harmonic function in $\mathbb H^2$, which is by construction, non-tangentially bounded and vanishes on the two rays given in the hypothesis. Then an argument exactly analogous to that of the proof of Theorem \ref{result_1} yields a contradiction.
\end{proof}

\bibliographystyle{amsplain}

\end{document}